\begin{document}
\title{Finite Difference Approximation with ADI Scheme for Two-dimensional Keller-Segel Equations}

\author[X.~Li]{Yubin Lu, Chi-An Chen, Xiaofan Li\corrauth, Chun Liu}
\address{Department of Applied Mathematics, College of Computing, Illinois Institute of Technology, Chicago, IL 60616, USA}
\email{{\tt lix@iit.edu} (X.~Li)}


\begin{abstract}
Keller-Segel systems are a set of nonlinear partial differential equations used to model chemotaxis in biology. In this paper, we propose two alternating direction implicit (ADI) schemes to solve the 2D Keller-Segel systems directly with minimal computational cost, while preserving positivity, energy dissipation law and mass conservation. One scheme unconditionally preserves positivity, while the other does so conditionally. Both schemes achieve second-order accuracy in space, with the former being first-order accuracy in time and the latter second-order accuracy in time. Besides, the former scheme preserves the energy dissipation law asymptotically. We validate these results through numerical experiments, and also compare the efficiency of our schemes with the standard five-point scheme, demonstrating that our approaches effectively reduce computational costs.
\end{abstract}

\ams{65M06, 35K61,35K55, 65Z05}
\keywords{Keller-Segel equations, energy dissipation, positive preserving, ADI scheme.}

\maketitle

\section{Introduction}
The Keller-Segel system as a mathematical model was proposed to model the phenomenon of chemotaxis in biology, established by Patlak \cite{Patlak} and Keller and Segel \cite{Keller-Segel,Keller-Segel1}. Chemotaxis is the migration of organisms in response to chemical signals. Usually, the organisms are attracted by chemical signals that could be emitted by themselves. On the other hand, there is another hidden mechanism, namely the diffusion phenomenon. These two mechanisms compete with each other, leading to different interesting biological phenomena. These phenomena could be summarized by the Keller-Segel equations, a system of parabolic-parabolic equations or parabolic-elliptic equations in a limiting case. So far, the Keller-Segel equations are not limited to describing such biological processes, but can also be widely used to describe various phenomena in social sciences and other competitive systems.\\
One of the most important mathematical issues of the Keller-Segel system is the well-posedness of their solutions. This issue has been well studied by mathematicians in some sense \cite{review1,review2,review3,blow-up1}. Roughly speaking, if the initial mass is less than a critical value, the solution exists globally, otherwise the solution will blow up in finite time. Certainly, blow-up does not occur in real scenarios. This indicates that the classical Keller-Segel model has limitations. Therefore, several improved models are designed to relax such restrictions \cite{Booktransport}. We will focus on the classical one.\\
In addition to the analytic aspect of the Keller-Segel equations, the numerical aspect is also extremely important for real applications. The Keller-Segel system consists of two equations, one of which describes the evolution of density of organisms and the other describes the evolution of the chemoattractant concentration. Naturally, the density and the concentration should remain nonnegative, the density should also preserve mass conservation under suitable boundary conditions, and the energy dissipation law holds.  A question arises: how do we design numerical solvers that preserve these properties? In addressing this question, many works have been proposed to solve such type equations with specific conditions. The most popular methods including linear/nonlinear finite volume schemes \cite{finite-volume1,finite-volume2,finite-volume3,finite-volume4,finite-volume5,finite-volume6}, discontinuous Galerkin schemes \cite{DiscontinuousGalerkin1,DiscontinuousGalerkin2}, discontinuous finite element scheme \cite{DiscontinuousFiniteElement}, finite difference scheme \cite{Finitedifference}, spectral method \cite{spectral}, and others \cite{SVA,hybrid}. Moreover, several general ways have been proposed to preserve the energy dissipation law for specific problems, including scalar auxiliary variable approach (SAV) \cite{SAV}, convex splitting method \cite{convexsplitting}, stabilization method \cite{stabilization1,stabilization2}, and the method of invariant energy quadratization (IEQ) \cite{IEQ1,IEQ2}. Furthermore, numerous numerical schemes have been developed to address the challenges posed by the Poisson-Nernst-Planck equations. Given the interrelated nature of Keller-Segel equations and Poisson-Nernst-Planck equations in the context of numerical scheme design, we recommend that readers consult \cite{LiuC2023,LiuHL2021,LiuHL2023,ShenJ2021}, and the references cited therein for more insights. Although these numerical methods are developed to preserve positivity, mass conservation and energy dissipation and so on, very few of the existing papers take into account the reduction of computational cost while still preserving these desired properties. 

In this work, we consider the following 2D Keller-Segel equations
\begin{align}
    \label{eqn:original_rho} \partial_t\rho&=\Delta\rho-\nabla\cdot(\rho\nabla c), \\
\label{eqn:original_c} \varepsilon\partial_tc&=\Delta c+\rho,\\
    \rho(\bold{x},0)&=f(\bold{x}),\qquad c(\bold{x},0)=g(\bold{x}),\nonumber
\end{align}
where $\rho$ and $c$ are the cell density and the chemoattractant concentration respectively, $\bold{x}=(x,y)\in\Omega\subset\mathbb{R}^2$, and $\varepsilon$ is a nonnegative constant. To preserve positivity,  Liu {\it{et al}}. \cite{Finitedifference} reformulated the density equation \eqref{eqn:original_rho} as the following symmetric form:
\begin{equation}\label{eqn:symmtric_form}
    \partial_t\rho=\nabla\cdot\left(e^{c}\nabla\left(\frac{\rho}{e^{c}}\right)\right),
\end{equation}
and proposed the following first-order semi-discrete scheme:
\begin{align}
    \label{eqn:semi-discrete_rho} \frac{\rho^{n+1}-\rho^{n}}{\Delta t} &= \Delta\rho^{n+1}-\nabla\cdot(\rho^{n+1}\nabla c^{n+1}),\\
    \label{eqn:semi-discrete_c}\varepsilon\frac{c^{n+1}-c^{n}}{\Delta t}&=\Delta c^{n+1}+\rho^n.
\end{align}
This scheme is not only stable, but also preserves positivity and mass conservation under some suitable conditions. See \cite{Finitedifference} for more details.\\
It is not hard to realize that naive spatial discretizations of \eqref{eqn:semi-discrete_rho} might destroy the positivity of the solution  and trigger instability. Denoting $M^{n+1}=e^{c^{n+1}}$, Liu {\it{et al}}. \cite{Finitedifference} rewrote the semi-discrete scheme \eqref{eqn:semi-discrete_rho} based on the symmetric form of the density equation \eqref{eqn:symmtric_form}:
\begin{align}
\label{eqn:semi-symmtric_form_rho} \frac{\rho^{n+1}-\rho^{n}}{\Delta t} &= \nabla\cdot\left(M^{n+1}\nabla\left(\frac{\rho^{n+1}}{M^{n+1}}\right)\right),\\
\label{eqn:semi-symmtric_form_c} \varepsilon\frac{c^{n+1}-c^{n}}{\Delta t}&=\Delta c^{n+1}+\rho^n.
\end{align}
Furthermore, denoting $h^{n+1}=\frac{\rho^{n+1}}{\sqrt{M^{n+1}}}$, one can reformulate \eqref{eqn:semi-symmtric_form_rho} into
\begin{align}
    h^{n+1}-\frac{\Delta t}{\sqrt{M^{n+1}}}\nabla\cdot\left(M^{n+1}\nabla\left(\frac{h^{n+1}}{\sqrt{M^{n+1}}}\right)\right)=\frac{\rho^{n}}{\sqrt{M^{n+1}}}.
\end{align}
We provide some finite difference notations here for convenience
\begin{align}
    \Delta_{+t}v(x,y,t)&:=v(x,y,t+\Delta t)-v(x,y,t),\\
    \delta_x v(x,y,t)&:=v(x+\frac{1}{2}\Delta x,y,t)-v(x-\frac{1}{2}\Delta x,y,t), \\
    \delta_{x}^{2}v(x,y,t)&:=v(x+\Delta x,y,t)-2v(x,y,t)+v(x-\Delta x,y,t),\\
     \delta_y v(x,y,t)&:=v(x,y+\frac{1}{2}\Delta y,t)-v(x,y-\frac{1}{2}\Delta y,t), \\
    \delta_{y}^{2}v(x,y,t)&:=v(x,y+\Delta y,t)-2v(x,y,t)+v(x,y-\Delta y,t),
\end{align}
\begin{align}
    \label{eqn:ADI_rho_x}\tau_x v&:=\frac{1}{\sqrt{M^{n+1}}}\delta_x\left( M^{n+1}\delta_x\left(\frac{v}{\sqrt{M^{n+1}}}\right)\right),\\
    \label{eqn:ADI_rho_y}\tau_y v&:=\frac{1}{\sqrt{M^{n+1}}}\delta_y\left( M^{n+1}\delta_y\left(\frac{v}{\sqrt{M^{n+1}}}\right)\right),
\end{align}
\begin{align}
    \label{eqn:ADI_rho_x_middle}\bar{\tau}_x v&:=\frac{1}{\sqrt{M^{n+\frac{1}{2}}}}\delta_x\left( M^{n+\frac{1}{2}}\delta_x\left(\frac{v}{\sqrt{M^{n+\frac{1}{2}}}}\right)\right),\\
    \label{eqn:ADI_rho_y_middle}\bar{\tau}_y v&:=\frac{1}{\sqrt{M^{n+\frac{1}{2}}}}\delta_y\left( M^{n+\frac{1}{2}}\delta_y\left(\frac{v}{\sqrt{M^{n+\frac{1}{2}}}}\right)\right).
\end{align}
We denote the spatial domain of solutions by $\Omega=[a,b]\times[c,d]$, and the mesh sizes by $\Delta x$ and $\Delta y$ in $x$- and $y$-directions respectively. We use $c_{i,j}^{n}$ to denote the numerical solution for $c(x_i,y_j,t_n)$. The other variables could be defined in similar ways.\\
Applying the standard five-point method for spatial discretization, one can obtain
\begin{align}
\label{eqn:semidiscrete_c} \left(\frac{\varepsilon}{\Delta t}-\frac{1}{(\Delta x)^2}\delta_{x}^{2}-\frac{1}{(\Delta y)^2}\delta_{y}^{2}\right)c_{i,j}^{n+1}&=\frac{\varepsilon}{\Delta t}c_{i,j}^{n}+\rho_{i,j}^{n},\\
\label{eqn:semidiscrete_rho} \left(1-\frac{\Delta t}{(\Delta x)^2}\tau_x-\frac{\Delta t}{(\Delta y)^2}\tau_y\right)h_{i,j}^{n+1}&=\frac{\rho_{i,j}^{n}}{\sqrt{M_{i,j}^{n+1}}}.
\end{align}
At $n$-th time step, one solves a sparse, $N_xN_y\times N_xN_y$ linear system for $c^{n+1}$ from \eqref{eqn:semidiscrete_c}; then solves another sparse, $N_xN_y\times N_xN_y$ linear system for $h^{n+1}$ from \eqref{eqn:semidiscrete_rho}. Subsequently, we can obtain the density $\rho_{i,j}^{n+1}$ using the following formula:
\begin{equation}
    \rho_{i,j}^{n+1}=h_{i,j}^{n+1}\sqrt{M_{i,j}^{n+1}}.
\end{equation}
This scheme not only preserves positivity and mass conservation but also decouples a nonlinear system into a two-stage linear system. This means that the Keller-Segel system could be solved efficiently using iterative methods. However, such as in \cite{Finitedifference}, if the grid is sufficiently fine, the computational cost becomes high. Especially, the solution to the Keller-Segel system blows up in finite time when the initial mass is large than  a critical value \cite{Booktransport}. In this case, usually, we would have to use an extremely fine grids to capture more precise asymptotic behavior and seek most efficient scheme to minimize computational cost. In this work, we construct an alternating direction implicit (ADI) scheme so that the linear systems corresponding to the discretized equations corresponding to \eqref{eqn:semi-symmtric_form_rho} and \eqref{eqn:semi-symmtric_form_c} can be solved directly with minimal computational cost while still preserving positivity, mass conservation and energy dissipation law asymptotically.


\section{ADI schemes}
ADI schemes solve the sparse linear systems using fast direct methods when parabolic partial differential equations are discretized in 2D or higher dimensions. In this section, we propose an ADI scheme for solving the Keller-Segel system \eqref{eqn:original_rho} and \eqref{eqn:original_c}. The scheme preserves positivity, mass conservation and energy dissipation law asymptotically. \\

\subsection{The ADI scheme}
\textbf{Concentration equation.} The five-point scheme \eqref{eqn:semidiscrete_c} for the semi-discrete scheme \eqref{eqn:semi-discrete_c} can be written as follows 
\begin{align}\label{eqn:semi-discrete4ADI_c1}
    (1-\mu_{x}^{\varepsilon}\delta_{x}^{2}-\mu_{y}^{\varepsilon}\delta_{y}^{2})c_{i,j}^{n+1}=c_{i,j}^{n} + \frac{\Delta t}{\varepsilon}\rho_{i,j}^{n},\quad\text{where}\quad\mu_{x}^{\varepsilon}=\frac{\Delta t}{\varepsilon(\Delta x)^2}\quad\text{and}\quad\mu_{y}^{\varepsilon}=\frac{\Delta t}{\varepsilon(\Delta y)^2}.
\end{align}
 Instead of \eqref{eqn:semi-discrete4ADI_c1}, we propose an ADI scheme discretizing Eq. \eqref{eqn:semi-discrete_c} as follows
\begin{align}\label{eqn:semi-discrete4ADI_c2}
    (1-\mu_{x}^{\varepsilon}\delta_{x}^{2})(1-\mu_{y}^{\varepsilon}\delta_{y}^{2})c_{i,j}^{n+1}=c_{i,j}^{n}+\frac{\Delta t}{\varepsilon}\rho_{i,j}^{n},
\end{align}
which is equivalent to the following 2-step method:
\begin{align}
    \label{eqn:semi-discrete4ADI_c3}(1-\mu_{x}^{\varepsilon}\delta_{x}^{2})c_{i,j}^{*}&=c_{i,j}^{n}+\frac{\Delta t}{\varepsilon}\rho_{i,j}^{n},\\
    \label{eqn:semi-discrete4ADI_c4}(1-\mu_{y}^{\varepsilon}\delta_{y}^{2})c_{i,j}^{n+1}&=c_{i,j}^{*}.
\end{align}
\textbf{Density equation.}
The five-point scheme \eqref{eqn:semidiscrete_rho} for the density $\rho$ can be rewritten as
\begin{align}\label{eqn:semi-discrete4ADI_rho1}
    [1-({\mu}_{x}\tau_x+{\mu}_{y}\tau_y)]h_{i,j}^{n+1}=\frac{\rho_{i,j}^{n}}{\sqrt{M_{i,j}^{n+1}}}, \quad\text{where}\quad{\mu}_x=\frac{\Delta t}{(\Delta x)^2}, {\mu}_y=\frac{\Delta t}{(\Delta y)^2}.
\end{align}
$\tau_x$ and $\tau_y$ are operators defined as in \eqref{eqn:ADI_rho_x} and \eqref{eqn:ADI_rho_y}. Instead of \eqref{eqn:semi-discrete4ADI_rho1}, we consider the following ADI method for the semi-discrete equation \eqref{eqn:semi-discrete_rho}
\begin{align}
    \label{eqn:semi-discrete4ADI_rho11} (1-{\mu}_x\tau_x)h_{i,j}^{*}&=\frac{\rho_{i,j}^{n}}{\sqrt{M_{i,j}^{n+1}}},\\
    \label{eqn:semi-discrete4ADI_rho12}(1-{\mu}_y\tau_y)h_{i,j}^{n+1}&=h_{i,j}^{*}.
\end{align}
The 2-step scheme \eqref{eqn:semi-discrete4ADI_rho11} and \eqref{eqn:semi-discrete4ADI_rho12} is equivalent to
\begin{align}\label{eqn:semi-discrete4ADI_rho2}
    (1-{\mu}_x\tau_x)(1-{\mu}_y\tau_y)h_{i,j}^{n+1}=\frac{\rho_{i,j}^{n}}{\sqrt{M_{i,j}^{n+1}}}.
\end{align}
It is straight forward to verify that the leading-order truncation errors of the ADI scheme \eqref{eqn:semi-discrete4ADI_c2} and \eqref{eqn:semi-discrete4ADI_rho2} for Keller-Segel system are the same as the five-point scheme \eqref{eqn:semi-discrete4ADI_c1} and \eqref{eqn:semi-discrete4ADI_rho1}, i.e., second-order in space and first-order in time. See Appendix \ref{errors}. The stability of this scheme holds if the Keller-Segel equations satisfy a technical condition and a small data condition. See \cite{Finitedifference} for more details.
\subsection{Conservative properties of the ADI scheme}


\subsubsection{Positivity preservation}
Preserving positivity in numerical solutions of the Keller-Segel equations is crucial for obtaining physically meaningful results, as it ensures that the solutions accurately reflect the behavior of the system being modeled. In this subsection, we prove that the ADI schemes \eqref{eqn:semi-discrete4ADI_c2} and \eqref{eqn:semi-discrete4ADI_rho2} preserve positivity for the Keller-Segel equations.\\
\begin{theorem}\label{Thm1}
Suppose $\rho_{i,j}^{0}\geq 0$, then the ADI scheme \eqref{eqn:semi-discrete4ADI_rho2} preserves $\rho_{i,j}^{n}\geq 0, n\in\mathbb{N}$.\\
\begin{proof}
In order to prove the ADI scheme \eqref{eqn:semi-discrete4ADI_rho2} preserves posititvity, it is enough to prove the equivalent form \eqref{eqn:semi-discrete4ADI_rho11} and \eqref{eqn:semi-discrete4ADI_rho12} preserves positivity. Therefore, we prove the first equation \eqref{eqn:semi-discrete4ADI_rho11} preserves positivity. The positivity-preserving property of the second equation \eqref{eqn:semi-discrete4ADI_rho12} could be proved in a similar way. \\
Recalling the fully discrete scheme of the operator $\tau_x$:
\begin{align}
    \tau_x h_{i,j}^{*}&=\frac{1}{\sqrt{M_{i,j}^{n+1}}}\delta_x\left(M_{i,j}^{n+1}\delta_x\left(\frac{h^{*}}{\sqrt{M^{n+1}}}\right)_{i,j}\right)\nonumber\\
    &=\frac{1}{\sqrt{M_{i,j}^{n+1}}}\left[\sqrt{M_{i,j}^{n+1}M_{i+1,j}^{n+1}}\left(\frac{h_{i+1,j}^{*}}{\sqrt{M_{i+1,j}^{n+1}}}-\frac{h_{i,j}^{*}}{\sqrt{M_{i,j}^{n+1}}}\right) - \sqrt{M_{i-1,j}^{n+1}M_{i,j}^{n+1}}\left(\frac{h_{i,j}^{*}}{\sqrt{M_{i,j}^{n+1}}}-\frac{h_{i-1,j}^{*}}{\sqrt{M_{i-1,j}^{n+1}}}\right)\right].
\end{align}
Here, $M_{i-\frac{1}{2},j}^{n+1}$ is approximated by its geometric mean $\sqrt{M_{i-1,j}^{n+1}M_{i,j}^{n+1}}$ and similarly for the other terms at half grid points. Therefore, the first equation of the scheme for Eq. \eqref{eqn:semi-discrete4ADI_rho11} can be written as:
\begin{align}\label{eqn:proofPositivity}
    h_{i,j}^{*}&=\frac{\mu_x}{\sqrt{M_{i,j}^{n+1}}}\left[\sqrt{M_{i,j}^{n+1}M_{i+1,j}^{n+1}}\left(\frac{h_{i+1,j}^{*}}{\sqrt{M_{i+1,j}^{n+1}}}-\frac{h_{i,j}^{*}}{\sqrt{M_{i,j}^{n+1}}}\right) + \sqrt{M_{i-1,j}^{n+1}M_{i,j}^{n+1}}\left(\frac{h_{i-1,j}^{*}}{\sqrt{M_{i-1,j}^{n+1}}}-\frac{h_{i,j}^{*}}{\sqrt{M_{i,j}^{n+1}}}\right)\right]\nonumber\\
    &+\frac{\rho_{i,j}^{n}}{\sqrt{M_{i.j}^{n+1}}}.
\end{align}
Let $(k,l)$ be the indices satisfy $h_{k,l}^{*}/\sqrt{M_{k,l}^{n+1}}={\rm min}_{i,j}\{h_{i,j}^{*}/\sqrt{M_{i,j}^{n+1}}\}$. Note that $M_{i,j}^{n+1}\geq 0$ and $\rho_{i,j}^{n}\geq 0$ for arbitrary $i,j$, then we conclude $h_{k,l}^{*}/\sqrt{M_{k,l}^{n+1}}\geq0$ from \eqref{eqn:proofPositivity} because both terms in the bracket of RHS of \eqref{eqn:proofPositivity} are nonnegative. Thus, ${\rm min}_{i,j}\{h_{i,j}^{*}/\sqrt{M_{i,j}^{n+1}}\}\geq 0$ which implies $h_{i,j}^{*}\geq0$ for all $i,j$'s. The second equation of the scheme for the density, Eq. \eqref{eqn:semi-discrete4ADI_rho12} could be verified similarly. It means that we have shown the positivity preservation of our ADI scheme for the density $\rho$.
\end{proof}
\end{theorem}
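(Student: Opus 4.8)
The plan is to prove positivity by induction on the time level $n$, working through the equivalent two-stage form \eqref{eqn:semi-discrete4ADI_rho11}--\eqref{eqn:semi-discrete4ADI_rho12} of the factored scheme \eqref{eqn:semi-discrete4ADI_rho2} and showing that each directional sweep sends a nonnegative input to a nonnegative output. The first point I would record is that $M_{i,j}^{n+1}=e^{c_{i,j}^{n+1}}>0$, so the density and the transformed variable always carry the same sign: $\rho_{i,j}^{m}\geq 0$ exactly when $h_{i,j}^{m}\geq 0$. Thus the induction hypothesis $\rho_{i,j}^{n}\geq 0$ feeds the nonnegative source $\rho_{i,j}^{n}/\sqrt{M_{i,j}^{n+1}}$ into the $x$-sweep, and once I have shown $h_{i,j}^{*}\geq 0$ and then $h_{i,j}^{n+1}\geq 0$, the identity $\rho_{i,j}^{n+1}=h_{i,j}^{n+1}\sqrt{M_{i,j}^{n+1}}\geq 0$ closes the induction from the base case $\rho_{i,j}^{0}\geq 0$.

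The heart of the argument is a discrete minimum principle applied not to $h^{*}$ itself but to the \emph{normalized} variable $w_{i,j}^{*}:=h_{i,j}^{*}/\sqrt{M_{i,j}^{n+1}}$, which is precisely where the symmetric form \eqref{eqn:symmtric_form} is exploited. For the $x$-sweep \eqref{eqn:semi-discrete4ADI_rho11} I would expand $\tau_x$ with the geometric-mean evaluation $M_{i-\frac12,j}^{n+1}\approx\sqrt{M_{i-1,j}^{n+1}M_{i,j}^{n+1}}$ at the half grid points and then rearrange so that $h_{i,j}^{*}$ stands alone on the left while the neighbor contributions appear on the right as a sum of two terms proportional to $(w_{i+1,j}^{*}-w_{i,j}^{*})$ and $(w_{i-1,j}^{*}-w_{i,j}^{*})$, with strictly positive coefficients $\mu_x\sqrt{M_{i\pm1,j}^{n+1}}$, plus the manifestly nonnegative source $\rho_{i,j}^{n}/\sqrt{M_{i,j}^{n+1}}$.

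Next I would evaluate this identity at an index $(k,l)$ minimizing $w^{*}$ over the grid. There both differences $w_{k\pm1,l}^{*}-w_{k,l}^{*}$ are nonnegative, so the bracketed sum is nonnegative and the identity forces $h_{k,l}^{*}\geq \rho_{k,l}^{n}/\sqrt{M_{k,l}^{n+1}}\geq 0$; dividing by $\sqrt{M_{k,l}^{n+1}}>0$ gives $w_{k,l}^{*}\geq 0$, and since $(k,l)$ is the minimizer, every $w_{i,j}^{*}$ and hence every $h_{i,j}^{*}$ is nonnegative. The $y$-sweep \eqref{eqn:semi-discrete4ADI_rho12} is then treated identically, with the now-nonnegative $h_{i,j}^{*}$ playing the role of the source in place of $\rho_{i,j}^{n}/\sqrt{M_{i,j}^{n+1}}$.

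I expect the main conceptual obstacle to be the choice of the right quantity for the minimum principle: applied naively to $h^{*}$ the coefficients do not have a usable sign, and it is only after passing to $w^{*}=h^{*}/\sqrt{M^{n+1}}$ that the geometric-mean discretization produces positive weights and a clean nonnegative source. Two structural checks remain around this core. First, the minimum principle acts on a solution that must exist, so I would verify separately that $1-\mu_x\tau_x$ and $1-\mu_y\tau_y$ are strictly diagonally dominant $M$-matrices, guaranteeing a unique solvable system at each sweep. Second, since the existence of a minimizer and the sign of the neighbor differences involve the boundary, I would confirm that the imposed no-flux boundary treatment does not create a spurious negative difference at $\partial\Omega$, so that the bracketed term stays nonnegative even at boundary nodes.
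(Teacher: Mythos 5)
Your proposal is correct and follows essentially the same route as the paper's proof: expanding $\tau_x$ with the geometric-mean values at the half grid points and applying a discrete minimum principle to the normalized variable $h^{*}/\sqrt{M^{n+1}}$ at its minimizing index $(k,l)$, then treating the $y$-sweep identically. The extra items you flag (explicit induction framing, solvability of each sweep via diagonal dominance, and the boundary check) are sensible refinements that the paper leaves implicit, but they do not alter the core argument.
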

\begin{remark}
Expanding the ADI scheme \eqref{eqn:semi-discrete4ADI_c3} and \eqref{eqn:semi-discrete4ADI_c4} for the chemoattractant concentration $c$
\begin{align}
    c_{i,j}^{*}&=\mu_{x}^{\varepsilon}(c_{i+1,j}^{*}-2c_{i,j}^{*}+c_{i-1,j}^{*})+c_{i,j}^{n}+\frac{\Delta t}{\varepsilon}\rho_{i,j}^{n},\\
    c_{i,j}^{n+1}&=\mu_{y}^{\varepsilon}(c_{i,j+1}^{n+1}-2c_{i,j}^{n+1}+c_{i,j-1}^{n+1})+c_{i,j}^{*},
\end{align}
we can show the positivity of concentration $c_{i,j}^{n}\geq 0$ following the same argument as in the proof of Theorem \ref{Thm1}.
\end{remark}

\subsubsection{Conservation of mass}
Mass conservation, $\frac{d}{dt}\int_{\Omega}\rho dx=0$, is another important property of the Keller-Segel equations. Therefore, we hope to preserve this property in the spatial discretization. In order to verify mass conservation of the proposed ADI scheme \eqref{eqn:semi-discrete4ADI_rho2}, we only need to verify $\sum_{i,j}\sqrt{M_{i,j}^{n+1}}\tau_x h_{i,j}^{n+1}=0$, $\sum_{i,j}\sqrt{M_{i,j}^{n+1}}\tau_y h_{i,j}^{n+1}=0$ and $\sum_{i,j}\sqrt{M_{i,j}^{n+1}}\tau_x\tau_y h_{i,j}^{n+1}=0$ separately. For example, we show that the fully discrete scheme of operator $\tau_x$ \eqref{eqn:ADI_rho_x} satisfies $\sum_{i,j}\sqrt{M_{i,j}^{n+1}}\tau_x h_{i,j}^{n+1}=0$. To be specific, taking $M_{i+\frac{1}{2},j}^{n+1}$ to be the geometric mean $\sqrt{M_{i,j}^{n+1}M_{i+1,j}^{n+1}}$, we have
\begin{align}
    \tau_x h_{i,j}^{n+1}&=\frac{1}{\sqrt{M_{i,j}^{n+1}}}\left[\sqrt{M_{i+1,j}^{n+1}M_{i,j}^{n+1}}\left(\frac{h_{i+1,j}^{n+1}}{\sqrt{M_{i+1,j}^{n+1}}}-\frac{h_{i,j}^{n+1}}{\sqrt{M_{i,j}^{n+1}}}\right)-\sqrt{{M_{i,j}^{n+1}M_{i-1,j}^{n+1}}}\left(\frac{h_{i,j}^{n+1}}{\sqrt{M_{i,j}^{n+1}}}-\frac{h_{i-1,j}^{n+1}}{\sqrt{M_{i-1,j}^{n+1}}}\right)\right]\nonumber\\
    &=\frac{1}{\sqrt{M_{i,j}^{n+1}}}\left[\left(\sqrt{M_{i,j}^{n+1}}h_{i+1,j}^{n+1}-\sqrt{M_{i-1,j}^{n+1}}h_{i,j}^{n+1}\right) + \left(\sqrt{M_{i,j}^{n+1}}h_{i-1,j}^{n+1}-\sqrt{M_{i+1,j}^{n+1}}h_{i,j}^{n+1}\right)\right].
\end{align}
Summing over $(i,j)$ of the above equation and assuming periodic boundary condition or other mass-conservative boundary conditions, we get
\begin{align}\label{eqn:mass-tau_x}
    \sum_{i,j}\sqrt{M_{i,j}^{n+1}}\tau_x h_{i,j}^{n+1}=0.
\end{align}
Similarly, we can verify that the fully discrete scheme of operators $\tau_y$ \eqref{eqn:ADI_rho_y} and $\tau_x\tau_y$ \eqref{eqn:tauxy} have mass conservation, i.e.,
\begin{align} \label{eqn:mass-tau_xy} 
     \sum_{i,j}\sqrt{M_{i,j}^{n+1}}\tau_y h_{i,j}^{n+1}=0,\quad
    \sum_{i,j}\sqrt{M_{i,j}^{n+1}}\tau_x\tau_y h_{i,j}^{n+1}=0.
\end{align}
Combining them into the scheme of density equation \eqref{eqn:semi-discrete4ADI_rho2} and recalling $\rho_{i,j}^{n+1}=\sqrt{M_{i,j}^{n+1}}h_{i,j}^{n+1}$, one can verify that the mass conservation of density variable $\rho$, that is, $\sum_{i,j}\rho_{i,j}^{n+1}=\sum_{i,j}\rho_{i,j}^{n}$. We note that the mass conservation of the concentration is $\sum_{i,j}c_{i,j}^{n+1}=\sum_{i,j}c_{i,j}^{n}+\frac{\Delta t}{\varepsilon}\sum_{i,j}\rho_{i,j}^{n}$ rather than $\sum_{i,j}c_{i,j}^{n+1}=\sum_{i,j}c_{i,j}^{n}$, which can be shown in the same manner.
\begin{remark}
    Since $0\leq i\leq N_x$ and $0\leq j\leq N_y$, we remark that the summation $\sum_{i,j}$ should be replaced by $\sum_{i,j=0}^{N_x-1,N_y-1}$ for periodic boundary conditions or $\sum_{i,j=1}^{N_x-1,N_y-1}$ for Neumann-type boundary conditions.
\end{remark}

\subsubsection{Energy dissipation}
It is well known that the  free energy of the Keller-Segel system satisfies an entropy-dissipation law. To be specific, the free energy of the parabolic-parabolic Keller-Segel system \eqref{eqn:original_rho} and \eqref{eqn:original_c} can be expressed as
\begin{align}\label{eqn:original_energy}
    E(\rho,c)=\int_{\Omega}\left[ \rho\log\rho-\rho-\rho c+\frac{1}{2}|\nabla c|^2\right]d\bold{x},
\end{align}
and the entropy-dissipation equality
\begin{align}\label{eqn:entropy-dissipation}
    \frac{d}{dt}E(t)=-\int_{\Omega}\left[ \rho|\nabla(\log\rho-c)|^2+\varepsilon|\partial_t c|^2\right]d\bold{x},
\end{align}
where the domain $\Omega\subset\mathbb{R}^2$ is bounded.\\
From the framework of energetic variational approach \cite{giga2017variational}, we can define the flow map $\mathbf{x}(\mathbf{X},t)$ for the cell density $\rho$ based on the mass conservation law: 
\begin{equation}
\begin{cases}
    \mathbf{u}(\mathbf{x},t) = \mathbf{x}_{t} ,\ t > 0,  \\
    \mathbf{x}(\mathbf{X},0) =  \mathbf{X}, 
\end{cases}
\end{equation} with $\mathbf{x} $ being the Eulerian coordinates, and $\mathbf{X}$ Lagrangian coordinates. Then the mass conservation law can be written as 
\begin{equation}
    \rho_{t} + \nabla \cdot (\rho \mathbf{u}) = 0,  
\end{equation} which implies 
\begin{equation}
    \rho(\mathbf{x}(\mathbf{X},t),t) = \frac{\rho_{0}(\mathbf{X})}{{\rm det}\frac{\partial \mathbf{x}}{\partial \mathbf{X}}}.
\end{equation}
With the flow map on $\rho$, the energy dissipation system \eqref{eqn:entropy-dissipation} becomes
\begin{equation}
    \label{eqn:energy_dissipation}
    \frac{d}{dt} E(\rho(\mathbf{x}(\mathbf{X},t),t),c) = -\int_{\Omega}[\rho|\mathbf{u}|^{2} + \epsilon|\partial_{t}c|^{2}]d\mathbf{x} = - 2 \mathcal{D},
\end{equation}where $c$ can be viewed as a fixed given function on the flow map of $\rho$ and the energy dissipation $\mathcal{D}(\mathbf{u},\partial_t c):=\frac{1}{2}\int_{\Omega}[\rho|\mathbf{u}|^{2} + \epsilon|\partial_{t}c|^{2}]d\mathbf{x}$. Now, we may define the action functional $\mathcal{A}(\mathbf{x}) = \int_{0}^{T} -E dt$ with respect to $\mathbf{x}$. According to the force balance law
\begin{equation}
    \label{eqn:force_balance}
    \frac{\delta \mathcal{A}}{\delta \mathbf{x}} = \frac{\delta \mathcal{D}}{\delta \mathbf{x}_{t}},
\end{equation}
we can get the first equation \eqref{eqn:original_rho} in the Keller-Segel system.
On the other hand, we may define another flow map $\mathbf{x}_{1}(\mathbf{X},t)$ for the chemoattractant concentration $c$ which satisfies the transport equation \cite{liu2020variational}: 
\begin{equation} 
    \label{eqn:transport_equation}
    \partial_{t} c + \mathbf{u}_{1} \cdot \nabla c = 0,
\end{equation} 
where \eqref{eqn:transport_equation} is equivalent to $c(\mathbf{x_{1}}(\mathbf{X},t),t) = c_{0}(\mathbf{X})$. For this flow map, the energy-dissipation system can be seen as 
\begin{equation}
    \frac{d}{dt} E(\rho(\mathbf{x_{1}},t),c_0(\mathbf{X}),F^{-1}\nabla_{\mathbf{X}} c_0(\mathbf{X})) = - 2 \Tilde{\mathcal{D}},
\end{equation}
where the Jacobian matrix $F = \frac{\partial \mathbf{x_1}}{\partial \mathbf{X}}$ and $\Tilde{\mathcal{D}}(\mathbf{u},\mathbf{u_1}):=\int_{\Omega} \rho |\mathbf{u}|^{2} + \epsilon |\mathbf{u_1}\cdot\nabla c|^{2} d\mathbf{x} $.\\
Now by the force balance law \eqref{eqn:force_balance} with $\mathbf{x}$ replaced by $\mathbf{x_{1}}$, we may obtain the second equation \eqref{eqn:original_c} of the Keller-Segel system. We refer the reader to \cite{forster2013mathematical,giga2017variational} for more background knowledge of the energetic variational approach.\\
In this part, we first prove that the original scheme \eqref{eqn:semidiscrete_c} and \eqref{eqn:semidiscrete_rho} for Keller-Segel equations proposed by Liu {\it{et al}}. \cite{Finitedifference} is energy dissipative. Subsequently, we discuss the energy dissipation law for the ADI scheme \eqref{eqn:semi-discrete4ADI_c2} and \eqref{eqn:semi-discrete4ADI_rho2}.\\
To this end, we impose two alternative boundary conditions on the density $\rho$ and the concentration $c$. One is the periodic boundary conditions for $\rho$ and $c$. The other is the following Neumann-type boundary conditions
\begin{align}\label{bd_cond}
     M\frac{\partial\left(\frac{\rho}{M}\right)}{\partial n}=0 \quad\text{and}\quad \frac{\partial c}{\partial n}=0, \quad\text{on}\quad \partial\Omega.
\end{align}
We first summarize the semi-discrete scheme result as the following theorem.
\begin{theorem}
    For the solution to the semi-discrete scheme \eqref{eqn:semi-symmtric_form_rho} and \eqref{eqn:semi-symmtric_form_c}, we have the following energy dissipation inequality:
    \begin{align}\label{eqn:semi_analytic_energylaw}
        E^{n+1}-E^{n}\leq -\Delta t\int_{\Omega}\left[\rho^{n+1}|\nabla(\log\rho^{n+1}-c^{n+1})|^2+\varepsilon\left|\frac{c^{n+1}-c^{n}}{\Delta t}\right|^2 \right]d\bold{x},\quad\text{for all}\quad n\geq 0,
    \end{align}
    where 
    \begin{align}\label{eqn:semi_energy}
        E^{n}=E(\rho^n,c^n)=\int_{\Omega}\left[ \rho^{n}\log\rho^{n}-\rho^{n}-\rho^{n} c^{n}+\frac{1}{2}|\nabla c^{n}|^2\right]d\bold{x}.
    \end{align}
\end{theorem}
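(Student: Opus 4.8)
The plan is to mimic the continuous entropy-dissipation computation at the discrete-in-time level, estimating $E^{n+1}-E^n$ group by group and pairing each piece with one of the two equations \eqref{eqn:semi-symmtric_form_rho}--\eqref{eqn:semi-symmtric_form_c} after a spatial integration by parts. Writing $\phi(\rho)=\rho\log\rho-\rho$, I first split
\[
E^{n+1}-E^n=\int_\Omega\Big[\big(\phi(\rho^{n+1})-\phi(\rho^n)\big)-\big(\rho^{n+1}c^{n+1}-\rho^nc^n\big)+\tfrac12\big(|\nabla c^{n+1}|^2-|\nabla c^n|^2\big)\Big]\,d\mathbf{x}
\]
and treat the entropy, cross, and gradient groups separately.

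The inequality (as opposed to equality) will enter through two convexity/quadratic discards. For the entropy group I use convexity of $\phi$ (note $\phi''(\rho)=1/\rho>0$): the tangent-line bound gives $\phi(\rho^{n+1})-\phi(\rho^n)\le\phi'(\rho^{n+1})(\rho^{n+1}-\rho^n)=\log\rho^{n+1}\,(\rho^{n+1}-\rho^n)$. For the gradient group I use the algebraic identity $\tfrac12(|\nabla c^{n+1}|^2-|\nabla c^n|^2)=\nabla c^{n+1}\cdot\nabla(c^{n+1}-c^n)-\tfrac12|\nabla(c^{n+1}-c^n)|^2\le\nabla c^{n+1}\cdot\nabla(c^{n+1}-c^n)$, discarding the nonpositive remainder. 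Splitting the cross term as $-(\rho^{n+1}-\rho^n)c^{n+1}-\rho^n(c^{n+1}-c^n)$ and collecting, the entropy bound and the first half of the cross term combine into exactly $(\log\rho^{n+1}-c^{n+1})(\rho^{n+1}-\rho^n)$, which is the quantity designed to pair with the density equation.

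The key reformulation is that since $M^{n+1}=e^{c^{n+1}}$, one has $M^{n+1}\nabla(\rho^{n+1}/M^{n+1})=\rho^{n+1}\nabla(\log\rho^{n+1}-c^{n+1})$, so \eqref{eqn:semi-symmtric_form_rho} reads $\rho^{n+1}-\rho^n=\Delta t\,\nabla\cdot\big(\rho^{n+1}\nabla(\log\rho^{n+1}-c^{n+1})\big)$. Substituting and integrating by parts (using periodic or the Neumann-type conditions \eqref{bd_cond}) turns $\int_\Omega(\log\rho^{n+1}-c^{n+1})(\rho^{n+1}-\rho^n)\,d\mathbf{x}$ into $-\Delta t\int_\Omega\rho^{n+1}|\nabla(\log\rho^{n+1}-c^{n+1})|^2\,d\mathbf{x}$, the first dissipation term. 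For the leftover $\int_\Omega[-\rho^n(c^{n+1}-c^n)+\nabla c^{n+1}\cdot\nabla(c^{n+1}-c^n)]\,d\mathbf{x}$, integrating the gradient term by parts yields $-\int_\Omega(\rho^n+\Delta c^{n+1})(c^{n+1}-c^n)\,d\mathbf{x}$; invoking \eqref{eqn:semi-symmtric_form_c} in the form $\rho^n+\Delta c^{n+1}=\varepsilon(c^{n+1}-c^n)/\Delta t$ collapses this to $-\tfrac{\varepsilon}{\Delta t}\int_\Omega|c^{n+1}-c^n|^2\,d\mathbf{x}=-\Delta t\int_\Omega\varepsilon|(c^{n+1}-c^n)/\Delta t|^2\,d\mathbf{x}$, the second dissipation term, completing the inequality.

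I expect the main obstacle to be organizational rather than technical: the bookkeeping of the cross term must be arranged so that precisely $\log\rho^{n+1}-c^{n+1}$ appears (and not, say, $\log\rho^{n+1}-c^n$), since a mismatched time index would fail to reproduce the symmetric-form flux $\rho^{n+1}\nabla(\log\rho^{n+1}-c^{n+1})$ and the argument would not close. I also need to verify the boundary terms vanish in each integration by parts: the Neumann-type condition $M\,\partial_n(\rho/M)=0$, equivalently $\rho^{n+1}\partial_n(\log\rho^{n+1}-c^{n+1})=0$, kills the flux boundary term, and $\partial_n c=0$ kills the gradient boundary term, while in the periodic case both cancel automatically.
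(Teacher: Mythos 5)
Your proposal is correct and follows essentially the same route as the paper: the same three-way splitting of $E^{n+1}-E^n$, the same cross-term rearrangement to produce $(\rho^{n+1}-\rho^n)(\log\rho^{n+1}-c^{n+1})$, the same discarded quadratic remainder for the gradient group, and the same pairing with the two semi-discrete equations after integration by parts (your tangent-line convexity bound for $\rho\log\rho-\rho$ is exactly the paper's identity \eqref{eqn:inequality_1} with its nonnegative remainder dropped, and applying $M^{n+1}\nabla(\rho^{n+1}/M^{n+1})=\rho^{n+1}\nabla(\log\rho^{n+1}-c^{n+1})$ before rather than after the integration by parts is only a cosmetic reordering). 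No gaps; in fact your final step is cleaner than the paper's, which contains a typographical slip (a missing square on $\frac{c^{n+1}-c^n}{\Delta t}$ in its penultimate display) that your version avoids.
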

\begin{proof}
Using the equality
\begin{align}\label{eqn:inequality_1}
        (a-b)\log a=(a\log a-a)-(b\log b-b)+\frac{(a-b)^2}{2\psi}, \quad\psi\in\left[\min\{a,b\},\max\{a,b\}\right],
    \end{align}
we have
\begin{align}\label{eqn:energy_proof_1}
    E^{n+1}-E^n=&\int_{\Omega}\left[\rho^{n+1}(\log\rho^{n+1}-1)-\rho^{n}(\log\rho^{n}-1)\right]-\rho^{n+1}c^{n+1}+\rho^{n}c^{n}+\frac{1}{2}(|\nabla c^{n+1}|^2-|\nabla c^{n}|^2)d\bold{x}\nonumber\\
    \leq&\int_{\Omega}\left[(\rho^{n+1}-\rho^{n})\log\rho^{n+1}-\rho^{n+1}c^{n+1}+\rho^{n}c^{n}+\frac{1}{2}(|\nabla c^{n+1}|^2-|\nabla c^{n}|^2)\right]d\bold{x}\nonumber\\
    =&\int_{\Omega}(\rho^{n+1}-\rho^{n})(\log\rho^{n+1}-c^{n+1})d\bold{x} + \int_{\Omega}\frac{1}{2}(|\nabla c^{n+1}|^2-|\nabla c^{n}|^2)d\bold{x} - \int_{\Omega}\rho^n(c^{n+1}-c^n)d\bold{x}.
\end{align}
Using Eq. \eqref{eqn:semi-symmtric_form_rho}, integration by parts and either of the boundary conditions on the variable $\rho$, from \eqref{eqn:energy_proof_1}, we obtain
\begin{align}\label{eqn:energy_proof_2}
    E^{n+1}-E^n\leq&\Delta t\int_{\Omega}\nabla\cdot\left[M^{n+1}\nabla\left(\frac{\rho^{n+1}}{M^{n+1}}\right)\right](\log\rho^{n+1}-c^{n+1})d\bold{x} + \int_{\Omega}\frac{1}{2}(|\nabla c^{n+1}|^2-|\nabla c^{n}|^2)d\bold{x}\nonumber\\
    &- \int_{\Omega}\rho^n(c^{n+1}-c^n)d\bold{x}\nonumber\\
    =&-\Delta t\int_{\Omega} M^{n+1}\nabla\left(\frac{\rho^{n+1}}{M^{n+1}}\right)\cdot\nabla(\log\rho^{n+1}-c^{n+1})d\bold{x} + \int_{\Omega}\frac{1}{2}(|\nabla c^{n+1}|^2-|\nabla c^{n}|^2)d\bold{x}\nonumber\\
    &- \int_{\Omega}\rho^n(c^{n+1}-c^n)d\bold{x}.
\end{align}
For two arbitrary scalars or vectors $\alpha,\beta$, using the equality
\begin{align}\label{eqn:inequality_2}
        (\alpha-\beta)\cdot\alpha=\frac{1}{2}(|\alpha|^2-|\beta|^2+|\alpha-\beta|^2),
\end{align}
integration by parts and either of the boundary conditions on the variable $c$, from \eqref{eqn:energy_proof_2}, we have
\begin{align}\label{eqn:energy_proof_3}
    E^{n+1}-E^n\leq& -\Delta t\int_{\Omega} M^{n+1}\nabla\left(\frac{\rho^{n+1}}{M^{n+1}}\right)\cdot\nabla(\log\rho^{n+1}-c^{n+1})d\bold{x} + \int_{\Omega}(\nabla c^{n+1}-\nabla c^{n})\cdot\nabla c^{n+1}d\bold{x}\nonumber\\
    &- \int_{\Omega}\rho^n(c^{n+1}-c^n)d\bold{x}\nonumber\\
    =&-\Delta t\int_{\Omega} M^{n+1}\nabla\left(\frac{\rho^{n+1}}{M^{n+1}}\right)\cdot\nabla(\log\rho^{n+1}-c^{n+1})d\bold{x} - \int_{\Omega}(c^{n+1}-c^{n})\Delta c^{n+1}d\bold{x}\nonumber\\
    &- \int_{\Omega}\rho^n(c^{n+1}-c^n)d\bold{x}\nonumber\\
    =&-\Delta t\int_{\Omega} M^{n+1}\nabla\left(\frac{\rho^{n+1}}{M^{n+1}}\right)\cdot\nabla(\log\rho^{n+1}-c^{n+1})d\bold{x} - \int_{\Omega}(c^{n+1}-c^{n})(\Delta c^{n+1}+\rho^n)d\bold{x}.
\end{align}
Using Eq. \eqref{eqn:semi-symmtric_form_c}, from \eqref{eqn:energy_proof_3}, we have
\begin{align}\label{eqn:energy_proof_4}
    E^{n+1}-E^n\leq&-\Delta t\int_{\Omega} M^{n+1}\nabla\left(\frac{\rho^{n+1}}{M^{n+1}}\right)\cdot\nabla(\log\rho^{n+1}-c^{n+1})d\bold{x} - \varepsilon\Delta t\int_{\Omega}\left|\frac{c^{n+1}-c^{n}}{\Delta t}\right|^2 d\bold{x}\nonumber\\
    =&-\Delta t\int_{\Omega}\left[\rho^{n+1}\left|\nabla(\log\rho^{n+1}-c^{n+1})\right|^2+\varepsilon\left|\frac{c^{n+1}-c^{n}}{\Delta t}\right|^2\right] d\bold{x}.
\end{align}
    For the last equality in \eqref{eqn:energy_proof_4}, the identical equation $M\nabla(\frac{\rho}{M})=\rho\nabla(\log\rho-c)$ are employed. We have shown the following energy dissipation law for the semi-discrete scheme \eqref{eqn:semi-symmtric_form_c} and \eqref{eqn:semi-symmtric_form_rho}: For all $n\geq 0$,
    \begin{align}
        E^{n+1}-E^{n}\leq -\Delta t\int_{\Omega}\left[\rho^{n+1}\left|\nabla(\log\rho^{n+1}-c^{n+1})\right|^2+\varepsilon\left|\frac{c^{n+1}-c^{n}}{\Delta t}\right|^2 \right]d\bold{x}.
    \end{align}
\end{proof}
To prove that the fully discrete five-point finite difference scheme \eqref{eqn:semidiscrete_c} and \eqref{eqn:semidiscrete_rho} preserves a discrete version of the energy dissipation, we state two propositions first. For convenience,
we assume the number of grid points $N=N_x=N_y$ and $0\leq i,j\leq N$.\\
We introduce several different inner products for two scalar functions $f(x,y)$ and $g(x,y)$ as follows
\begin{align}
    \label{innproduct_k}\left<f,g\right>_{k}&:=\sum_{i,j=1}^{N-1}f_{i,j}g_{i,j}\Delta x\Delta y,\\
    \label{innproduct_mx}\left<f,g\right>_{m_x}&:=\sum_{i=0,j=1}^{N-1}f_{i+\frac{1}{2},j}g_{i+\frac{1}{2},j}\Delta x\Delta y,\\
    \label{innproduct_my}\left<f,g\right>_{m_y}&:=\sum_{i=1,j=0}^{N-1}f_{i,j+\frac{1}{2}}g_{i,j+\frac{1}{2}}\Delta x\Delta y,
\end{align}
where $f_{i+\frac{1}{2},j}$ denotes the value at $(x_i+\frac{1}{2}\Delta x,y_j)$, etc.\\
For 2D vector-valued functions $\mathbf{u}(x,y)=(u_1(x,y),u_2(x,y))$ and $\mathbf{v}(x,y)=(v_1(x,y),v_2(x,y))$, we define the corresponding inner products as
\begin{align}
    \left<\mathbf{u},\mathbf{v}\right>_k&:=\left<u_1,v_1\right>_{k}+\left<u_2,v_2\right>_{k},\\
    \left<\mathbf{u},\mathbf{v}\right>_{m}&:=\left<u_1,v_1\right>_{m_x}+\left<u_2,v_2\right>_{m_y}.
\end{align}
Besides, we introduce the discrete gradient operator $\bm{\delta}=\left(\frac{1}{\Delta x}\delta_x,\frac{1}{\Delta y}\delta_y\right).$\\
Similar to Flavell {\it{et al}}. \cite{Li_2017}, we have the following two summation-by-parts formulas.
\begin{proposition}\label{prop_sumbypart1}
    The following summation-by-parts formula holds
    \begin{align}\label{sumbypart1}
        &\left<\frac{\sqrt{M^{n+1}}}{(\Delta x)^2}\tau_x\left(\frac{\rho^{n+1}}{\sqrt{M^{n+1}}}\right)+\frac{\sqrt{M^{n+1}}}{(\Delta y)^2}\tau_y\left(\frac{\rho^{n+1}}{\sqrt{M^{n+1}}}\right),\log\rho^{n+1}-c^{n+1}\right>_k\nonumber\\
        =&-\left<M^{n+1}\bm{\delta}\left(\frac{\rho^{n+1}}{M^{n+1}}\right),\bm{\delta}(\log\rho^{n+1}-c^{n+1})\right>_m\nonumber\\
        &-\frac{\Delta y}{\Delta x}\sum_{j=1}^{N-1}\left[M_{\frac{1}{2},j}^{n+1}\delta_x\left(\frac{\rho}{M}\right)_{\frac{1}{2},j}^{n+1}\left(\log\rho_{0,j}^{n+1}-c_{0,j}^{n+1}\right)\right]\nonumber\\
        &+\frac{\Delta y}{\Delta x}\sum_{j=1}^{N-1}\left[M_{N-\frac{1}{2},j}^{n+1}\delta_x\left(\frac{\rho}{M}\right)_{N-\frac{1}{2},j}^{n+1}\left(\log\rho_{N,j}^{n+1}-c_{N,j}^{n+1}\right)\right]\nonumber\\
        &-\frac{\Delta x}{\Delta y}\sum_{i=1}^{N-1}\left[M_{i,\frac{1}{2}}^{n+1}\delta_y\left(\frac{\rho}{M}\right)_{i,\frac{1}{2}}^{n+1}\left(\log\rho_{i,0}^{n+1}-c_{i,0}^{n+1}\right)\right]\nonumber\\
        &+\frac{\Delta x}{\Delta y}\sum_{i=1}^{N-1}\left[M_{i,N-\frac{1}{2}}^{n+1}\delta_y\left(\frac{\rho}{M}\right)_{i,N-\frac{1}{2}}^{n+1}\left(\log\rho_{i,N}^{n+1}-c_{i,N}^{n+1}\right)\right].
    \end{align}
    Furthermore, implementing the Neumann-type boundary conditions \eqref{bd_cond} as $\delta_x\left(\frac{\rho}{M}\right)_{\frac{1}{2},j}^{n+1}=\delta_x\left(\frac{\rho}{M}\right)_{N-\frac{1}{2},j}^{n+1}=\delta_y\left(\frac{\rho}{M}\right)_{i,\frac{1}{2}}^{n+1}=\delta_y\left(\frac{\rho}{M}\right)_{i,N-\frac{1}{2}}^{n+1}=0$, we obtain the following simplified version of summation-by-parts formula
    \begin{align}\label{sumbypart1_noBoundary}
        &\left<\frac{\sqrt{M^{n+1}}}{(\Delta x)^2}\tau_x\left(\frac{\rho^{n+1}}{\sqrt{M^{n+1}}}\right)+\frac{\sqrt{M^{n+1}}}{(\Delta y)^2}\tau_y\left(\frac{\rho^{n+1}}{\sqrt{M^{n+1}}}\right),\log\rho^{n+1}-c^{n+1}\right>_k\nonumber\\
        &=-\left<M^{n+1}\bm{\delta}\left(\frac{\rho^{n+1}}{M^{n+1}}\right),\bm{\delta}\left(\log\rho^{n+1}-c^{n+1}\right)\right>_m.
    \end{align}
\end{proposition}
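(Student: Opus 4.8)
The plan is to reduce the left-hand side to a genuine discrete divergence tested against $\log\rho^{n+1}-c^{n+1}$, and then to apply discrete summation by parts (Abel summation) in each coordinate direction separately. The crucial preliminary observation is that, because $\tau_x$ in \eqref{eqn:ADI_rho_x} carries one factor $1/\sqrt{M^{n+1}}$ in front and one inside, feeding it the argument $\rho^{n+1}/\sqrt{M^{n+1}}$ collapses the weights cleanly:
\[
\frac{\sqrt{M^{n+1}}}{(\Delta x)^2}\tau_x\!\left(\frac{\rho^{n+1}}{\sqrt{M^{n+1}}}\right)=\frac{1}{(\Delta x)^2}\delta_x\!\left(M^{n+1}\delta_x\!\left(\frac{\rho^{n+1}}{M^{n+1}}\right)\right),
\]
and identically for the $y$-term with $\delta_y$ and $(\Delta y)^2$. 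Hence the left-hand side is exactly the discrete analog of $\int_\Omega \nabla\cdot(M\nabla(\rho/M))(\log\rho-c)\,d\mathbf{x}$, and the asserted identity \eqref{sumbypart1} is its discrete integration-by-parts counterpart. Since the inner product $\langle\cdot,\cdot\rangle_k$ of \eqref{innproduct_k} splits the two directions, I would handle the $x$-sum and $y$-sum independently.

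For the $x$-direction, abbreviate $g:=\log\rho^{n+1}-c^{n+1}$ and set $F_{i+\frac12,j}:=M_{i+\frac12,j}^{n+1}\,\delta_x(\rho/M)_{i+\frac12,j}^{n+1}$, with the half-integer subscripts denoting the cell-edge (half-grid) quantities (e.g.\ the geometric mean used elsewhere in the paper). The core computation is the one-dimensional Abel summation, obtained by writing $(\delta_x F)_{i,j}=F_{i+\frac12,j}-F_{i-\frac12,j}$ and telescoping over the interior nodes:
\[
\sum_{i=1}^{N-1}(\delta_x F)_{i,j}\,g_{i,j}=-\sum_{i=0}^{N-1}F_{i+\frac12,j}\,(\delta_x g)_{i+\frac12,j}-F_{\frac12,j}\,g_{0,j}+F_{N-\frac12,j}\,g_{N,j}.
\]
Multiplying by $\Delta y/\Delta x=\Delta x\Delta y/(\Delta x)^2$ and summing over $j=1,\dots,N-1$, the interior double sum becomes precisely the $x$-component of the $\langle\cdot,\cdot\rangle_m$ inner product: each of $M\bm{\delta}(\rho/M)$ and $\bm{\delta}g$ contributes one factor $1/\Delta x$, matching the half-grid structure of $\langle\cdot,\cdot\rangle_{m_x}$ in \eqref{innproduct_mx}. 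The two endpoint contributions $-F_{\frac12,j}g_{0,j}$ and $+F_{N-\frac12,j}g_{N,j}$ then reproduce verbatim the first and second boundary sums in \eqref{sumbypart1}.

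Repeating the identical argument in the $y$-direction — now with $F_{i,j+\frac12}:=M_{i,j+\frac12}^{n+1}\delta_y(\rho/M)_{i,j+\frac12}^{n+1}$, a prefactor $\Delta x/\Delta y$, and summation over $i=1,\dots,N-1$ — yields the $y$-component of $\langle\cdot,\cdot\rangle_m$ together with the third and fourth boundary sums, and adding the two directions gives \eqref{sumbypart1}. The simplified formula \eqref{sumbypart1_noBoundary} is then immediate: under the Neumann-type conditions \eqref{bd_cond}, the discrete fluxes $\delta_x(\rho/M)$ and $\delta_y(\rho/M)$ vanish on the four boundary edges, so every boundary sum drops out. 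I expect the only delicate point to be the index bookkeeping in the Abel summation: the outer difference $\delta_x$ runs over interior nodes $i=1,\dots,N-1$ while the resulting flux sum runs over edges $i=0,\dots,N-1$, and one must check that exactly the edges $\tfrac12$ and $N-\tfrac12$ survive as boundary terms while no interior edge is double-counted or dropped.
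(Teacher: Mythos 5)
Your proposal is correct and follows essentially the same route as the paper's proof: the same weight-collapse identity $\sqrt{M^{n+1}}\,\tau_x\bigl(\rho^{n+1}/\sqrt{M^{n+1}}\bigr)=\delta_x\bigl(M^{n+1}\delta_x\bigl(\rho^{n+1}/M^{n+1}\bigr)\bigr)$, the same direction-by-direction index-shift (Abel) summation producing the $\langle\cdot,\cdot\rangle_{m_x}$ and $\langle\cdot,\cdot\rangle_{m_y}$ interior sums plus the four boundary sums, and the same use of the discrete Neumann conditions to annihilate the boundary terms. Your one-dimensional Abel identity checks out exactly (both sides reduce to $-\sum_{i=1}^{N-2}F_{i+\frac12,j}(\delta_x g)_{i+\frac12,j}-F_{\frac12,j}g_{1,j}+F_{N-\frac12,j}g_{N-1,j}$), so the index bookkeeping you flagged as the delicate point is indeed handled correctly.
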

\begin{proof}
    By the definition, we have
    \begin{align}\label{eqn:prop1_proof_1}
        &\left<\frac{\sqrt{M^{n+1}}}{(\Delta x)^2}\tau_x\left(\frac{\rho^{n+1}}{\sqrt{M^{n+1}}}\right)+\frac{\sqrt{M^{n+1}}}{(\Delta y)^2}\tau_y\left(\frac{\rho^{n+1}}{\sqrt{M^{n+1}}}\right),\log\rho^{n+1}-c^{n+1}\right>_k\nonumber\\
        =&\Delta x\Delta y\sum_{i,j=1}^{N-1}\left\{\frac{1}{(\Delta x)^2}\delta_x\left[M^{n+1}\delta_x\left(\frac{\rho^{n+1}}{M^{n+1}}\right)\right]_{i,j}+\frac{1}{(\Delta y)^2}\delta_y\left[M^{n+1}\delta_y\left(\frac{\rho^{n+1}}{M^{n+1}}\right)\right]_{i,j}\right\}\left(\log\rho_{i,j}^{n+1}-c_{i,j}^{n+1}\right)\nonumber\\
        =&\Delta y\sum_{i,j=1}^{N-1}\left[\frac{M_{i+\frac{1}{2},j}^{n+1}\delta_x\left(\frac{\rho}{M}\right)_{i+\frac{1}{2},j}^{n+1}-M_{i-\frac{1}{2},j}^{n+1}\delta_x\left(\frac{\rho}{M}\right)_{i-\frac{1}{2},j}^{n+1}}{\Delta x}\right]\left(\log\rho_{i,j}^{n+1}-c_{i,j}^{n+1}\right)\nonumber\\
        &+\Delta x\sum_{i,j=1}^{N-1}\left[\frac{M_{i,j+\frac{1}{2}}^{n+1}\delta_y\left(\frac{\rho}{M}\right)_{i,j+\frac{1}{2}}^{n+1}-M_{i,j-\frac{1}{2}}^{n+1}\delta_y\left(\frac{\rho}{M}\right)_{i,j-\frac{1}{2}}^{n+1}}{\Delta y}\right]\left(\log\rho_{i,j}^{n+1}-c_{i,j}^{n+1}\right).
    \end{align}
    For the first term on the right-hand side of \eqref{eqn:prop1_proof_1}, we can split it into two parts as follows
    \begin{align}\label{eqn:prop1_proof_2}
        &\Delta y\sum_{i,j=1}^{N-1}\left[\frac{M_{i+\frac{1}{2},j}^{n+1}\delta_x(\frac{\rho}{M})_{i+\frac{1}{2},j}^{n+1}-M_{i-\frac{1}{2},j}^{n+1}\delta_x\left(\frac{\rho}{M}\right)_{i-\frac{1}{2},j}^{n+1}}{\Delta x}\right]\left(\log\rho_{i,j}^{n+1}-c_{i,j}^{n+1}\right)\nonumber\\
        =&\frac{\Delta y}{\Delta x}\sum_{i,j=1}^{N-1}\left[M_{i+\frac{1}{2},j}^{n+1}\delta_x\left(\frac{\rho}{M}\right)_{i+\frac{1}{2},j}^{n+1}\left(\log\rho_{i,j}^{n+1}-c_{i,j}^{n+1}\right)\right]\nonumber\\
        &-\frac{\Delta y}{\Delta x}\sum_{i,j=1}^{N-1}\left[M_{i-\frac{1}{2},j}^{n+1}\delta_x\left(\frac{\rho}{M}\right)_{i-\frac{1}{2},j}^{n+1}\left(\log\rho_{i,j}^{n+1}-c_{i,j}^{n+1}\right)\right]\nonumber\\
        =&\frac{\Delta y}{\Delta x}\sum_{i,j=1}^{N-1}\left[M_{i+\frac{1}{2},j}^{n+1}\delta_x\left(\frac{\rho}{M}\right)_{i+\frac{1}{2},j}^{n+1}\left(\log\rho_{i,j}^{n+1}-c_{i,j}^{n+1}\right)\right]\nonumber\\
        &-\frac{\Delta y}{\Delta x}\sum_{i=0}^{N-2}\sum_{j=1}^{N-1}\left[M_{i+\frac{1}{2},j}^{n+1}\delta_x\left(\frac{\rho}{M}\right)_{i+\frac{1}{2},j}^{n+1}\left(\log\rho_{i+1,j}^{n+1}-c_{i+1,j}^{n+1}\right)\right]\nonumber\\
        =&-\Delta x\Delta y\sum_{i=0,j=1}^{N-1}\left[M_{i+\frac{1}{2},j}^{n+1}\frac{\delta_x\left(\frac{\rho}{M}\right)_{i+\frac{1}{2},j}^{n+1}}{\Delta x}\frac{\left(\log\rho_{i+1,j}^{n+1}-c_{i+1,j}^{n+1}\right)-\left(\log\rho_{i,j}^{n+1}-c_{i,j}^{n+1}\right)}{\Delta x}\right]\nonumber\\
        &-\frac{\Delta y}{\Delta x}\sum_{j=1}^{N-1}\left[M_{\frac{1}{2},j}^{n+1}\delta_x\left(\frac{\rho}{M}\right)_{\frac{1}{2},j}^{n+1}\left(\log\rho_{0,j}^{n+1}-c_{0,j}^{n+1}\right)\right]\nonumber\\
        &+\frac{\Delta y}{\Delta x}\sum_{j=1}^{N-1}\left[M_{N-\frac{1}{2},j}^{n+1}\delta_x\left(\frac{\rho}{M}\right)_{N-\frac{1}{2},j}^{n+1}\left(\log\rho_{N,j}^{n+1}-c_{N,j}^{n+1}\right)\right]\nonumber\\
        =&-\frac{1}{(\Delta x)^2}\left<M^{n+1}\delta_x\left(\frac{\rho^{n+1}}{M^{n+1}}\right),\delta_x\left(\log\rho^{n+1}-c^{n+1}\right)\right>_{m_x}\nonumber\\
        &-\frac{\Delta y}{\Delta x}\sum_{j=1}^{N-1}\left[M_{\frac{1}{2},j}^{n+1}\delta_x\left(\frac{\rho}{M}\right)_{\frac{1}{2},j}^{n+1}\left(\log\rho_{0,j}^{n+1}-c_{0,j}^{n+1}\right)\right]\nonumber\\
        &+\frac{\Delta y}{\Delta x}\sum_{j=1}^{N-1}\left[M_{N-\frac{1}{2},j}^{n+1}\delta_x\left(\frac{\rho}{M}\right)_{N-\frac{1}{2},j}^{n+1}\left(\log\rho_{N,j}^{n+1}-c_{N,j}^{n+1}\right)\right].
    \end{align}
    Similarly, for the second term on the right-hand side of \eqref{eqn:prop1_proof_1}, we have
    \begin{align}\label{eqn:prop1_proof_3}
        &\Delta x\sum_{i,j=1}^{N-1}\left[\frac{M_{i,j+\frac{1}{2}}^{n+1}\delta_y\left(\frac{\rho}{M}\right)_{i,j\frac{1}{2}}^{n+1}-M_{i,j-\frac{1}{2}}^{n+1}\delta_y\left(\frac{\rho}{M}\right)_{i,j-\frac{1}{2}}^{n+1}}{\Delta y}\right]\left(\log\rho_{i,j}^{n+1}-c_{i,j}^{n+1}\right)\nonumber\\
        =&-\frac{1}{(\Delta y)^2}\left<M^{n+1}\delta_y\left(\frac{\rho^{n+1}}{M^{n+1}}\right),\delta_y\left(\log\rho^{n+1}-c^{n+1}\right)\right>_{m_y}\nonumber\\
        &-\frac{\Delta x}{\Delta y}\sum_{i=1}^{N-1}\left[M_{i,\frac{1}{2}}^{n+1}\delta_y\left(\frac{\rho}{M}\right)_{i,\frac{1}{2}}^{n+1}\left(\log\rho_{i,0}^{n+1}-c_{i,0}^{n+1}\right)\right]\nonumber\\
        &+\frac{\Delta x}{\Delta y}\sum_{i=1}^{N-1}\left[M_{i,N-\frac{1}{2}}^{n+1}\delta_y\left(\frac{\rho}{M}\right)_{i,N-\frac{1}{2}}^{n+1}\left(\log\rho_{i,N}^{n+1}-c_{i,N}^{n+1}\right)\right].
    \end{align}
    Combining \eqref{eqn:prop1_proof_2} and \eqref{eqn:prop1_proof_3}, we obtain the formula \eqref{sumbypart1}.\\
    If the Neumann-type boundary conditions \eqref{bd_cond} are discretized as $\delta_x\left(\frac{\rho}{M}\right)_{\frac{1}{2},j}^{n+1}=\delta_x\left(\frac{\rho}{M}\right)_{N-\frac{1}{2},j}^{n+1}=\delta_y\left(\frac{\rho}{M}\right)_{i,\frac{1}{2}}^{n+1}=\delta_y\left(\frac{\rho}{M}\right)_{i,N-\frac{1}{2}}^{n+1}=0$, \eqref{sumbypart1} is simplified to \eqref{sumbypart1_noBoundary}.
    
\end{proof}
\begin{remark}\label{remark_sumbypart_1}
    If the boundary conditions is given as periodic conditions, the simplified formula \eqref{sumbypart1_noBoundary} holds if we redefine the inner products in \eqref{innproduct_k}-\eqref{innproduct_my} as 
    \begin{align}
    \left<f,g\right>_{k}&:=\sum_{i,j=0}^{N-1}f_{i,j}g_{i,j}\Delta x\Delta y,\\
    \left<f,g\right>_{m_x}&:=\sum_{i,j=0}^{N-1}f_{i+\frac{1}{2},j}g_{i+\frac{1}{2},j}\Delta x\Delta y,\\
    \left<f,g\right>_{m_y}&:=\sum_{i,j=0}^{N-1}f_{i,j+\frac{1}{2}}g_{i,j+\frac{1}{2}}\Delta x\Delta y.
    \end{align}
\end{remark}
Similarly, we can also prove the following summation-by-parts formula.
\begin{proposition}\label{prop_sumbypart2}
    \begin{align}\label{sumbypart2}
    \left<c^{n+1}-c^n,\bm{\delta}^2c^{n+1}\right>_k=&-\left<\bm{\delta} c^{n+1},\bm{\delta} \left(c^{n+1}-c^n\right)\right>_{m}\nonumber\\
    &-\frac{\Delta y}{\Delta x}\sum_{j=1}^{N-1}(c_{0,j}^{n+1}-c_{0,j}^{n})\delta_xc_{\frac{1}{2},j}^{n+1}+\frac{\Delta y}{\Delta x}\sum_{j=1}^{N-1}(c_{N,j}^{n+1}-c_{N,j}^{n})\delta_xc_{N-\frac{1}{2},j}^{n+1}\nonumber\\
    &-\frac{\Delta x}{\Delta y}\sum_{i=1}^{N-1}(c_{i,0}^{n+1}-c_{i,0}^{n})\delta_xc_{i,\frac{1}{2}}^{n+1}+\frac{\Delta x}{\Delta y}\sum_{i=1}^{N-1}(c_{i,N}^{n+1}-c_{i,N}^{n})\delta_xc_{i,N-\frac{1}{2}}^{n+1}.
    \end{align}
    Furthermore, implementing the Neumann-type boundary conditions \eqref{bd_cond} as $\delta_x c_{\frac{1}{2},j}^{n+1}=\delta_x c_{N-\frac{1}{2},j}^{n+1}=\delta_y c_{i,\frac{1}{2}}^{n+1}=\delta_y c_{i,N-\frac{1}{2}}^{n+1}=0$, we have 
    \begin{align}\label{sumbypart2_noBoundary}
        \left<c^{n+1}-c^n,\bm{\delta}^2c^{n+1}\right>_k=&-\left<\bm{\delta} c^{n+1},\bm{\delta} \left(c^{n+1}-c^n\right)\right>_{m}.
    \end{align}
\end{proposition}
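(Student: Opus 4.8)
The plan is to mimic the argument of Proposition~\ref{prop_sumbypart1}, treating the discrete Laplacian $\bm{\delta}^2=\frac{1}{(\Delta x)^2}\delta_x^2+\frac{1}{(\Delta y)^2}\delta_y^2$ as the special case of the operator appearing there with the weight $M^{n+1}$ replaced by the constant $1$, and with the two slots of the inner product filled by $c^{n+1}$ and the increment $c^{n+1}-c^n$. First I would expand the left-hand side by the definition \eqref{innproduct_k} and split it into an $x$-part and a $y$-part:
\begin{align*}
    \left<c^{n+1}-c^n,\bm{\delta}^2 c^{n+1}\right>_k
    =&\ \frac{\Delta x\Delta y}{(\Delta x)^2}\sum_{i,j=1}^{N-1}\delta_x^2 c_{i,j}^{n+1}\left(c_{i,j}^{n+1}-c_{i,j}^n\right)\\
    &+\frac{\Delta x\Delta y}{(\Delta y)^2}\sum_{i,j=1}^{N-1}\delta_y^2 c_{i,j}^{n+1}\left(c_{i,j}^{n+1}-c_{i,j}^n\right),
\end{align*}
and then treat the two sums separately.

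For the $x$-part I would write $\delta_x^2 c_{i,j}^{n+1}=\delta_x c_{i+\frac12,j}^{n+1}-\delta_x c_{i-\frac12,j}^{n+1}$ and perform a discrete summation by parts (Abel summation), shifting the index in the second half of the difference from $i$ to $i+1$ exactly as in \eqref{eqn:prop1_proof_2}. This telescoping produces the interior term $-\frac{1}{(\Delta x)^2}\left<\delta_x c^{n+1},\delta_x(c^{n+1}-c^n)\right>_{m_x}$ together with two boundary sums over $j$ evaluated at the half-points $\tfrac12$ and $N-\tfrac12$. The $y$-part is handled identically, yielding $-\frac{1}{(\Delta y)^2}\left<\delta_y c^{n+1},\delta_y(c^{n+1}-c^n)\right>_{m_y}$ and two boundary sums over $i$.

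Combining the two parts and recalling that $\bm{\delta}c=\left(\frac{1}{\Delta x}\delta_x c,\frac{1}{\Delta y}\delta_y c\right)$ together with the definition of the vector inner product $\left<\cdot,\cdot\right>_m$, the two interior terms assemble into $-\left<\bm{\delta}c^{n+1},\bm{\delta}(c^{n+1}-c^n)\right>_m$, which is the leading term of \eqref{sumbypart2}, while the four boundary sums are precisely those listed there. The simplified identity \eqref{sumbypart2_noBoundary} then follows at once by imposing the discrete Neumann conditions $\delta_x c_{\frac12,j}^{n+1}=\delta_x c_{N-\frac12,j}^{n+1}=\delta_y c_{i,\frac12}^{n+1}=\delta_y c_{i,N-\frac12}^{n+1}=0$, which annihilate every boundary sum.

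The only place demanding care---and the main obstacle---is the index bookkeeping in the Abel summation: one must track which half-integer endpoints survive after telescoping and check that the residual terms land exactly at the lower boundary ($i=0$, half-point $\tfrac12$) and the upper boundary ($i=N$, half-point $N-\tfrac12$) with the correct signs, rather than generating spurious interior contributions. Since this computation is structurally identical to \eqref{eqn:prop1_proof_2}--\eqref{eqn:prop1_proof_3} with $M^{n+1}\equiv 1$, no new estimate is required; it is purely a matter of reproducing that telescoping faithfully for the pair $(c^{n+1},\,c^{n+1}-c^n)$.
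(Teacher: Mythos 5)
Your proposal is correct and follows exactly the route the paper intends: the paper's own proof of Proposition~\ref{prop_sumbypart2} simply states that it is "similar to that of Proposition~\ref{prop_sumbypart1}," and your argument is precisely that telescoping summation-by-parts with $M^{n+1}\equiv 1$, split into $x$- and $y$-parts, with the surviving endpoint terms at $i=0,N$ and $j=0,N$ giving the four boundary sums (your derivation also confirms these should involve $\delta_y$ in the last two sums, where the paper's displayed formula has a typographical $\delta_x$). Nothing further is needed.
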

\begin{proof}
    The proof is similar to that of Proposition \ref{prop_sumbypart1}.
\end{proof}

\begin{remark}\label{remark_sumbypart_2}
    If the boundary conditions is given as periodic conditions, the simplified formula \eqref{sumbypart2_noBoundary} holds if we redefine the inner products in \eqref{innproduct_k}-\eqref{innproduct_my} as Remark \ref{remark_sumbypart_1}.
\end{remark}

Now we are ready to show the discrete version of the energy dissipation law. When the boundary conditions are of Neumann-type, we approximate $E^n$ by the fully discrete energy at time $t_n$ as 
\begin{equation}\label{eqn:fully_discrete_energy}
    \mathcal{E}^n=\Delta x\Delta y\sum_{i,j=1}^{N-1}\left[\rho_{i,j}^{n}\log\rho_{i,j}^{n}-\rho_{i,j}^{n}-\rho_{i,j}^{n}c_{i,j}^{n}\right]+\frac{1}{2}\Delta x\Delta y\left[\sum_{i=0,j=1}^{N-1}\left|\frac{\delta_x c_{i+\frac{1}{2},j}^{n}}{\Delta x}\right|^2+\sum_{i=1,j=0}^{N-1}\left|\frac{\delta_y c_{i,j+\frac{1}{2}}^{n}}{\Delta y}\right|^2\right].
\end{equation}
When the boundary conditions are periodic, we approximate $E^n$ \eqref{eqn:semi_energy} as 
\begin{align}
    \label{eqn:fully_discrete_energy_1}
    \mathcal{E}^n=\Delta x\Delta y\sum_{i,j=0}^{N-1}\left[\rho_{i,j}^{n}\log\rho_{i,j}^{n}-\rho_{i,j}^{n}-\rho_{i,j}^{n}c_{i,j}^{n}\right]+\frac{1}{2}\Delta x\Delta y\left[\sum_{i,j=0}^{N-1}\left|\frac{\delta_x c_{i+\frac{1}{2},j}^{n}}{\Delta x}\right|^2+\sum_{i,j=0}^{N-1}\left|\frac{\delta_y c_{i,j+\frac{1}{2}}^{n}}{\Delta y}\right|^2\right].
\end{align}
\begin{remark}
    Under Neumann-type boundary condtions, the discrete energy $\mathcal{E}^n$ \eqref{eqn:fully_discrete_energy} has the leading order quadratic error $O(\Delta x\Delta y)$ for approximating the analytic energy $E(\rho(t_n),c(t_n))$. Under the periodic boundary conditions, the first term of \eqref{eqn:fully_discrete_energy_1} approximates $\int_{\Omega}\left(\rho^n\log\rho^n-\rho^n-\rho^n c^n\right)d\bold{x}$ with spectral accuracy while the second term of \eqref{eqn:fully_discrete_energy_1} has the leading-order error of $O\left(\left(\Delta x\right)^3\Delta y\right)+O\left(\Delta x\left(\Delta y\right)^3\right)$ due to the second-order accuracy of central differencing $\frac{\delta_xc_{i+\frac{1}{2},j}^{n}}{\Delta x}$ to $\frac{\partial c}{\partial x}\left(x_{i+\frac{1}{2}},y_j,t_n\right)$ and $\frac{\delta_yc_{i,j+\frac{1}{2}}^{n}}{\Delta y}$ to $\frac{\partial c}{\partial y}\left(x_i,y_{j+\frac{1}{2}},t_n\right)$.
\end{remark}

The following discrete version of the energy dissipation law holds for the finite difference scheme \eqref{eqn:semidiscrete_c} and \eqref{eqn:semidiscrete_rho}.
\begin{theorem}\label{Thm:fullyEnergy}
    For the solution to the finite difference scheme \eqref{eqn:semidiscrete_c} and \eqref{eqn:semidiscrete_rho}, we have the following energy dissipation inequality:
    \begin{align}\label{eqn:fullyenergy}
        \mathcal{E}^{n+1}-\mathcal{E}^n\leq& -\Delta t\left<M^{n+1}\bm{\delta}\left(\frac{\rho^{n+1}}{M^{n+1}}\right),\bm{\delta}\left(\log\rho^{n+1}-c^{n+1}\right)\right>_m\nonumber\\
        &-\varepsilon\Delta t\left<\frac{c^{n+1}-c^{n}}{\Delta t},\frac{c^{n+1}-c^{n}}{\Delta t}\right>_k.
    \end{align}
\end{theorem}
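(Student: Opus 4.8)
The plan is to carry the semi-discrete argument of the previous theorem over to the fully discrete setting, using the two summation-by-parts formulas of Propositions~\ref{prop_sumbypart1} and~\ref{prop_sumbypart2} in place of integration by parts, and the five-point schemes \eqref{eqn:semidiscrete_c}--\eqref{eqn:semidiscrete_rho} in place of the continuous equations. First I would expand $\mathcal{E}^{n+1}-\mathcal{E}^n$ from the definition \eqref{eqn:fully_discrete_energy} and apply the pointwise convexity identity \eqref{eqn:inequality_1} to the entropy contribution $\rho\log\rho-\rho$ at each grid node. Discarding the nonnegative remainder $(\rho_{i,j}^{n+1}-\rho_{i,j}^{n})^2/(2\psi_{i,j})$ produces the discrete analogue of \eqref{eqn:energy_proof_1}, namely $\mathcal{E}^{n+1}-\mathcal{E}^n \le \langle \rho^{n+1}-\rho^{n},\, \log\rho^{n+1}-c^{n+1}\rangle_k + \tfrac{1}{2}\big(\langle \bm{\delta} c^{n+1},\bm{\delta} c^{n+1}\rangle_m - \langle \bm{\delta} c^{n},\bm{\delta} c^{n}\rangle_m\big) - \langle \rho^{n},\, c^{n+1}-c^{n}\rangle_k$.

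For the density term I would rewrite the scheme \eqref{eqn:semidiscrete_rho} in conservative form. Multiplying by $\sqrt{M^{n+1}}$ and using $\rho^{n+1}=\sqrt{M^{n+1}}\,h^{n+1}$ gives $\rho^{n+1}-\rho^{n}=\Delta t\,\sqrt{M^{n+1}}\big(\tfrac{\tau_x}{(\Delta x)^2}+\tfrac{\tau_y}{(\Delta y)^2}\big)\big(\rho^{n+1}/\sqrt{M^{n+1}}\big)$, so that $\langle \rho^{n+1}-\rho^{n},\log\rho^{n+1}-c^{n+1}\rangle_k$ is exactly $\Delta t$ times the left-hand side of \eqref{sumbypart1_noBoundary}. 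Invoking Proposition~\ref{prop_sumbypart1} then converts this into $-\Delta t\,\langle M^{n+1}\bm{\delta}(\rho^{n+1}/M^{n+1}),\,\bm{\delta}(\log\rho^{n+1}-c^{n+1})\rangle_m$, which is precisely the first term on the right of \eqref{eqn:fullyenergy}.

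For the concentration terms I would apply the vector identity \eqref{eqn:inequality_2} with $\alpha=\bm{\delta} c^{n+1}$ and $\beta=\bm{\delta} c^{n}$ to obtain $\tfrac{1}{2}\big(\langle\bm{\delta} c^{n+1},\bm{\delta} c^{n+1}\rangle_m-\langle\bm{\delta} c^{n},\bm{\delta} c^{n}\rangle_m\big)\le \langle \bm{\delta}(c^{n+1}-c^n),\bm{\delta} c^{n+1}\rangle_m$, and then use Proposition~\ref{prop_sumbypart2} to recast the right-hand side as $-\langle c^{n+1}-c^n,\bm{\delta}^2 c^{n+1}\rangle_k$. Combined with the remaining cross term this leaves $-\langle c^{n+1}-c^n,\,\bm{\delta}^2 c^{n+1}+\rho^n\rangle_k$; rearranging the concentration scheme \eqref{eqn:semidiscrete_c} as $\bm{\delta}^2 c^{n+1}+\rho^n=\tfrac{\varepsilon}{\Delta t}(c^{n+1}-c^n)$ collapses this to $-\varepsilon\Delta t\,\langle\tfrac{c^{n+1}-c^n}{\Delta t},\tfrac{c^{n+1}-c^n}{\Delta t}\rangle_k$. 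Summing the density and concentration contributions gives \eqref{eqn:fullyenergy}.

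The main obstacle is bookkeeping rather than any genuine difficulty: one must track the $\sqrt{M^{n+1}}$ weights so that the discretized density equation reproduces exactly the weighted operator appearing on the left of \eqref{sumbypart1_noBoundary}, and one must ensure that every boundary sum in Propositions~\ref{prop_sumbypart1} and~\ref{prop_sumbypart2} vanishes. The latter holds under the discrete Neumann conditions \eqref{bd_cond} (the boundary fluxes $\delta_x(\rho/M)$, $\delta_y(\rho/M)$, $\delta_x c$, $\delta_y c$ are all set to zero) or, in the periodic case, after replacing the inner products by the versions in Remarks~\ref{remark_sumbypart_1} and~\ref{remark_sumbypart_2}; only then do the simplified identities \eqref{sumbypart1_noBoundary} and \eqref{sumbypart2_noBoundary} apply.
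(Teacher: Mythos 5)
Your proposal is correct and follows essentially the same route as the paper's proof: the convexity identity \eqref{eqn:inequality_1} for the entropy term, the identity \eqref{eqn:inequality_2} for the gradient term, and then Propositions \ref{prop_sumbypart1} and \ref{prop_sumbypart2} together with the schemes \eqref{eqn:semidiscrete_c}--\eqref{eqn:semidiscrete_rho} and the discrete Neumann (or periodic) boundary conditions to collapse everything to the two dissipation terms. The only difference is organizational (you treat the density and concentration contributions separately, whereas the paper carries them together through \eqref{eqn:fullyDiscrete_proof_0}--\eqref{eqn:fullyDiscrete_proof_2}), which does not change the argument.
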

\begin{proof}
    Using the equality \eqref{eqn:inequality_1}, we have
    \begin{align}\label{eqn:fullyDiscrete_proof_0}
         \mathcal{E}^{n+1}- \mathcal{E}^n=&\Delta x\Delta y\sum_{i,j=1}^{N-1}\left[\rho_{i,j}^{n+1}\left(\log\rho_{i,j}^{n+1}-1\right)-\rho_{i,j}^{n}\left(\log\rho_{i,j}^{n}-1\right)\right]\nonumber\\
        &-\Delta x\Delta y\sum_{i,j=1}^{N-1}\left[\rho_{i,j}^{n+1}c_{i,j}^{n+1}-\rho_{i,j}^{n}c_{i,j}^{n}\right]\nonumber\\
        &+\frac{1}{2}\frac{\Delta y}{\Delta x}\sum_{i=0,j=1}^{N-1}\left(\left|\delta_xc_{i+\frac{1}{2},j}^{n+1}\right|^2-\left|\delta_xc_{i+\frac{1}{2},j}^{n}\right|^2\right)+\frac{1}{2}\frac{\Delta x}{\Delta y}\sum_{i=1,j=0}^{N-1}\left(\left|\delta_yc_{i,j+\frac{1}{2}}^{n+1}\right|^2-\left|\delta_xc_{i,j+\frac{1}{2}}^{n}\right|^2\right)\nonumber\\
        \leq&\Delta x\Delta y\sum_{i,j=1}^{N-1}\left[\left(\rho_{i,j}^{n+1}-\rho_{i,j}^{n}\right)\log\rho_{i,j}^{n+1}\right]-\Delta x\Delta y\sum_{i,j=1}^{N-1}\left[\rho_{i,j}^{n+1}c_{i,j}^{n+1}-\rho_{i,j}^{n}c_{i,j}^{n}\right]\nonumber\\
        &+\frac{1}{2}\frac{\Delta y}{\Delta x}\sum_{i=0,j=1}^{N-1}\left(\left|\delta_xc_{i+\frac{1}{2},j}^{n+1}\right|^2-\left|\delta_xc_{i+\frac{1}{2},j}^{n}\right|^2\right)+\frac{1}{2}\frac{\Delta x}{\Delta y}\sum_{i=1,j=0}^{N-1}\left(\left|\delta_yc_{i,j+\frac{1}{2}}^{n+1}\right|^2-\left|\delta_xc_{i,j+\frac{1}{2}}^{n}\right|^2\right)\nonumber\\
        =&\Delta x\Delta y\sum_{i,j=1}^{N-1}\left[\left(\rho_{i,j}^{n+1}-\rho_{i,j}^{n}\right)\left(\log\rho_{i,j}^{n+1}-c_{i,j}^{n+1}\right)-\rho_{i,j}^{n}\left(c_{i,j}^{n+1}-c_{i,j}^{n}\right)\right]\nonumber\\
        &+\frac{1}{2}\frac{\Delta y}{\Delta x}\sum_{i=0,j=1}^{N-1}\left(\left|\delta_xc_{i+\frac{1}{2},j}^{n+1}\right|^2-\left|\delta_xc_{i+\frac{1}{2},j}^{n}\right|^2\right)+\frac{1}{2}\frac{\Delta x}{\Delta y}\sum_{i=1,j=0}^{N-1}\left(\left|\delta_yc_{i,j+\frac{1}{2}}^{n+1}\right|^2-\left|\delta_xc_{i,j+\frac{1}{2}}^{n}\right|^2\right).
    \end{align}
    Using the equality \eqref{eqn:inequality_2}, from \eqref{eqn:fullyDiscrete_proof_0}, we have
    \begin{align}\label{eqn:fullyDiscrete_proof_1}
         \mathcal{E}^{n+1}- \mathcal{E}^n\leq&\Delta x\Delta y\sum_{i,j=1}^{N-1}\left[\left(\rho_{i,j}^{n+1}-\rho_{i,j}^{n}\right)\left(\log\rho_{i,j}^{n+1}-c_{i,j}^{n+1}\right)-\rho_{i,j}^{n}\left(c_{i,j}^{n+1}-c_{i,j}^{n}\right)\right]\nonumber\\
        &+\frac{\Delta y}{\Delta x}\sum_{i=0,j=1}^{N-1}\left(\delta_xc_{i+\frac{1}{2},j}^{n+1}-\delta_xc_{i+\frac{1}{2},j}^{n}\right)\delta_xc_{i+\frac{1}{2},j}^{n+1}+\frac{\Delta x}{\Delta y}\sum_{i=1,j=0}^{N-1}\left(\delta_yc_{i,j+\frac{1}{2}}^{n+1}-\delta_yc_{i,j+\frac{1}{2}}^{n}\right)\delta_yc_{i,j+\frac{1}{2}}^{n+1}\nonumber\\
        =&\left<\rho^{n+1}-\rho^n,\log\rho^{n+1}-c^{n+1}\right>_k + \left<\frac{\delta_x c^{n+1}-\delta_x c^n}{\Delta x},\frac{\delta_x c^{n+1}}{\Delta x}\right>_{m_x}\nonumber\\
        &+ \left<\frac{\delta_y c^{n+1}-\delta_y c^n}{\Delta y},\frac{\delta_y c^{n+1}}{\Delta y}\right>_{m_y}-\left<c^{n+1}-c^n,\rho^n\right>_k. 
    \end{align}
    Using Eqs. \eqref{eqn:semidiscrete_c}-\eqref{eqn:semidiscrete_rho}, Proposition \ref{prop_sumbypart1}, Proposition \ref{prop_sumbypart2} and the Neumann boundary conditions, from \eqref{eqn:fullyDiscrete_proof_1}, we obtain
    \begin{align}\label{eqn:fullyDiscrete_proof_2}
         \mathcal{E}^{n+1}- \mathcal{E}^n\leq&\Delta t\left<\frac{\sqrt{M^{n+1}}}{(\Delta x)^2}\tau_x\left(\frac{\rho^{n+1}}{\sqrt{M^{n+1}}}\right)+\frac{\sqrt{M^{n+1}}}{(\Delta y)^2}\tau_y\left(\frac{\rho^{n+1}}{\sqrt{M^{n+1}}}\right),\log\rho^{n+1}-c^{n+1}\right>_k\nonumber\\
        &+ \left<\frac{\delta_x c^{n+1}-\delta_x c^n}{\Delta x},\frac{\delta_x c^{n+1}}{\Delta x}\right>_{m_x}+ \left<\frac{\delta_y c^{n+1}-\delta_y c^n}{\Delta y},\frac{\delta_y c^{n+1}}{\Delta y}\right>_{m_y}-\left<c^{n+1}-c^n,\rho^n\right>_k\nonumber\\
        =&\Delta t\left<\frac{\sqrt{M^{n+1}}}{(\Delta x)^2}\tau_x\left(\frac{\rho^{n+1}}{\sqrt{M^{n+1}}}\right)+\frac{\sqrt{M^{n+1}}}{(\Delta y)^2}\tau_y\left(\frac{\rho^{n+1}}{\sqrt{M^{n+1}}}\right),\log\rho^{n+1}-c^{n+1}\right>_k\nonumber\\
        &-\left<c^{n+1}-c^n,\bm{\delta}^2c^{n+1}+\rho^n\right>_k\nonumber\\
        =&-\Delta t\left[\left<M^{n+1}\bm{\delta}\left(\frac{\rho^{n+1}}{M^{n+1}}\right),\bm{\delta}\left(\log\rho^{n+1}-c^{n+1}\right)\right>_m+\varepsilon\left<\frac{c^{n+1}-c^n}{\Delta t},\frac{c^{n+1}-c^n}{\Delta t}\right>_k\right].
    \end{align}
    Therefore, we have completed the proof.
\end{proof}
\begin{remark}
    Now, we compare the discrete energy dissipation law \eqref{eqn:fullyenergy} with the semi-analytic one \eqref{eqn:semi_analytic_energylaw}.  For the fully discrete five-point scheme \eqref{eqn:semidiscrete_c} and \eqref{eqn:semidiscrete_rho}, we take 
    \begin{align}\label{eqn:fullyEnergy_proof1}
        \left(\frac{1}{\Delta x}M_{i+\frac{1}{2},j}^{n+1}\delta_x\left(\frac{\rho}{M}\right)_{i+\frac{1}{2},j}^{n+1},\frac{1}{\Delta y}M_{i,j+\frac{1}{2}}^{n+1}\delta_y\left(\frac{\rho}{M}\right)_{i,j+\frac{1}{2}}^{n+1}\right)
    \end{align}
    as an approximation for $M^{n+1}\nabla\left(\frac{\rho^{n+1}}{M^{n+1}}\right)$ with a second-order accuracy in space. Based on the identity $M^{n+1}\nabla\left(\frac{\rho}{M}\right)^{n+1}=\rho^{n+1}\nabla\left(\log\rho^{n+1}-c^{n+1}\right)$, Eq. \eqref{eqn:fullyEnergy_proof1} can be considered as 
    \begin{align}\label{eqn:fullyEnergy_proof2}
        \left(\frac{1}{\Delta x}\rho_{i+\frac{1}{2},j}^{n+1}\delta_x\left(\log\rho-c\right)_{i+\frac{1}{2},j}^{n+1},\frac{1}{\Delta y}\rho_{i,j+\frac{1}{2}}^{n+1}\delta_y\left(\log\rho-c\right)_{i,j+\frac{1}{2}}^{n+1}\right),
    \end{align}
    which approximates $\rho^{n+1}\nabla\left(\log\rho^{n+1}-c^{n+1}\right)$ with a second-order accuracy in space. Therefore, the discrete energy dissipation law (Theorem \ref{Thm:fullyEnergy}) gives
    
    \begin{align}
        \mathcal{E}^{n+1}-\mathcal{E}^n\leq& -\Delta t\left<M^{n+1}\bm{\delta}\left(\frac{\rho^{n+1}}{M^{n+1}}\right),\bm{\delta}\left(\log\rho^{n+1}-c^{n+1}\right)\right>_m-\varepsilon\Delta t\left<\frac{c^{n+1}-c^{n}}{\Delta t},\frac{c^{n+1}-c^{n}}{\Delta t}\right>_k\nonumber\\
        =&-\Delta t\int_{\Omega}\left[\rho^{n+1}|\nabla(\log\rho^{n+1}-c^{n+1})|^2+\varepsilon\left|\frac{c^{n+1}-c^{n}}{\Delta t}\right|^2 \right]d\bold{x}+O\left(\Delta t\left(\Delta x\right)^2\right)+O\left(\Delta t\left(\Delta y\right)^2\right).
    \end{align}
    In other words, we have shown the discrete version of the energy dissipation law for the fully discrete five-point scheme \eqref{eqn:semidiscrete_c} and \eqref{eqn:semidiscrete_rho} matches with the semi-analytic one \eqref{eqn:semi_analytic_energylaw} with second-order accuracy in space.
\end{remark}
\begin{remark}
    It can be shown that the energy law of the ADI schemes \eqref{eqn:semi-discrete4ADI_c2} and \eqref{eqn:semi-discrete4ADI_rho2} is 
    \begin{align}
        \mathcal{E}^{n+1}-\mathcal{E}^n\leq&-\Delta t\left[\left<\rho^{n+1}\bm{\delta}\left(\log\rho^{n+1}-c^{n+1}\right),\bm{\delta}\left(\log\rho^{n+1}-c^{n+1}\right)\right>_m+\varepsilon\left<\frac{c^{n+1}-c^n}{\Delta t},\frac{c^{n+1}-c^n}{\Delta t}\right>_k\right]\nonumber\\
        &-\Delta t\left< \mu_x\mu_y\sqrt{M^{n+1}}\tau_x\tau_y\left(\frac{\rho}{\sqrt{M}}\right)^{n+1},\log\rho^{n+1}-c^{n+1}\right>_k\nonumber\\
        &+\Delta t\left<c^{n+1}-c^n,\mu_{x}^{\varepsilon}\mu_{y}^{\varepsilon}\delta_{x}^{2}\delta_{y}^{2}c^{n+1}\right>_k+O\left(\Delta t\left(\Delta x\right)^2\right)+O\left(\Delta t\left(\Delta y\right)^2\right).
    \end{align}
    Notice that the high-order operators $\mu_x\mu_y\sqrt{M^{n+1}}\tau_x\tau_y\left(\frac{\rho}{\sqrt{M}}\right)^{n+1}$ and $\mu_{x}^{\varepsilon}\mu_{y}^{\varepsilon}\delta_{x}^{2}\delta_{y}^{2}c^{n+1}$ converge to zero with order $O((\Delta t)^2)$. Therefore, we know that the energy dissipation law of the ADI schemes \eqref{eqn:semi-discrete4ADI_c2} and \eqref{eqn:semi-discrete4ADI_rho2} holds when the grid sizes $\Delta x$, $\Delta y$ and the time step size $\Delta t$ tend to zero.
\end{remark}

\subsection{A second-order scheme}
Inspired by the Crank-Nicolson scheme, we can extend the previous ADI scheme to a second-order scheme in time. The preceding sections have demonstrated that our scheme exhibits second-order spatial accuracy. To achieve second-order accuracy in time, we propose an additive splitting ADI scheme that resembles a Crank-Nicolson scheme. To be specific, 
\begin{align}
    \label{eqn:additiveADI_c1}\varepsilon\frac{c^{n+\frac{1}{2}}-c^n}{\Delta t/2}&=\frac{1}{(\Delta x)^2}\delta_{x}^2c^{n+\frac{1}{2}}+\frac{1}{(\Delta y)^2}\delta_{y}^2c^{n}+\rho^{n},\\
    \label{eqn:additiveADI_c2}\varepsilon\frac{c^{n+1}-c^{n+\frac{1}{2}}}{\Delta t/2}&=\frac{1}{(\Delta x)^2}\delta_{x}^2c^{n+\frac{1}{2}}+\frac{1}{(\Delta y)^2}\delta_{y}^2c^{n+1}+\rho^{n+1},
\end{align}
and 
\begin{align}
    \label{eqn:additiveADI_rho1} \frac{\rho^{n+\frac{1}{2}}-\rho^n}{\Delta t/2}&=\sqrt{M^{n+\frac{1}{2}}}\left[\frac{1}{(\Delta x)^2}\bar{\tau}_{x}\left(\frac{\rho^{n+\frac{1}{2}}}{\sqrt{M^{n+\frac{1}{2}}}}\right)+\frac{1}{(\Delta y)^2}\bar{\tau}_{y}\left(\frac{\rho^{n}}{\sqrt{M^{n+\frac{1}{2}}}}\right)\right],\\
    \label{eqn:additiveADI_rho2} \frac{\rho^{n+1}-\rho^{n+\frac{1}{2}}}{\Delta t/2}&=\sqrt{M^{n+\frac{1}{2}}}\left[\frac{1}{(\Delta x)^2}\bar{\tau}_{x}\left(\frac{\rho^{n+\frac{1}{2}}}{\sqrt{M^{n+\frac{1}{2}}}}\right)+\frac{1}{(\Delta y)^2}\bar{\tau}_{y}\left(\frac{\rho^{n+1}}{\sqrt{M^{n+\frac{1}{2}}}}\right)\right].
\end{align}
Adding Eq. \eqref{eqn:additiveADI_c1} and Eq. \eqref{eqn:additiveADI_c2}, we have
\begin{align}
    \label{eqn:additiveADI_c} \varepsilon\frac{c^{n+1}-c^n}{\Delta t}=\left[ \frac{1}{(\Delta x)^2}\delta_{x}^{2}c^{n+\frac{1}{2}}+\frac{1}{2(\Delta y)^2}\delta_{y}^{2}(c^n+c^{n+1}) \right]+\frac{\rho^n+\rho^{n+1}}{2}.
\end{align}
Similarly, adding Eq. \eqref{eqn:additiveADI_rho1} and Eq. \eqref{eqn:additiveADI_rho2}, we have
\begin{align}
    \label{eqn:additiveADI_rho} \frac{\rho^{n+1}-\rho^n}{\Delta t}=\sqrt{M^{n+\frac{1}{2}}}\left[ \frac{1}{(\Delta x)^2}\bar{\tau}_{x}\left(\frac{\rho^{n+\frac{1}{2}}}{\sqrt{M^{n+\frac{1}{2}}}}\right)+\frac{1}{2(\Delta y)^2}\bar{\tau}_{y}\left(\frac{\rho^n}{\sqrt{M^{n+\frac{1}{2}}}}+\frac{\rho^{n+1}}{\sqrt{M^{n+\frac{1}{2}}}}\right) \right].
\end{align}
\begin{remark}
    The variable $\rho^{n+1}$ is unknown in \eqref{eqn:additiveADI_c2} or \eqref{eqn:additiveADI_c}. To preserve the second-order time convergence, we could use $2\rho^n-\rho^{n-1}$ to approximate $\rho^{n+1}$. Using this approximation, it is easy to show that the truncation errors of \eqref{eqn:additiveADI_c} and \eqref{eqn:additiveADI_rho} are $O(\Delta t)^2+O((\Delta x)^2)+O((\Delta y)^2)$. Moreover, this scheme is asymptotic preserving to the quasi-static limit. 
\end{remark}
The fully-discrete equation for the first-stage of the ADI scheme Eq. \eqref{eqn:additiveADI_c1} with regard to the concerntration $c$ is 
\begin{align}\label{eqn:additiveADI_fully_c}
    \varepsilon c_{i,j}^{n+\frac{1}{2}}&=\frac{\mu_x}{2}\left(c_{i-1,j}^{n+\frac{1}{2}}-2c_{i,j}^{n+\frac{1}{2}}+c_{i+1,j}^{n+\frac{1}{2}}\right)+\frac{\mu_y}{2}\left(c_{i,j-1}^{n}-2c_{i,j}^{n}+c_{i,j+1}^{n}\right)+\varepsilon c_{i,j}^{n}+\frac{\Delta t}{2}\rho_{i,j}^{n},\nonumber\\
    &=\frac{\mu_x}{2}\left(c_{i-1,j}^{n+\frac{1}{2}}-2c_{i,j}^{n+\frac{1}{2}}+c_{i+1,j}^{n+\frac{1}{2}}\right)+\left[ \frac{\mu_y}{2}c_{i,j-1}^{n}+(\varepsilon-\mu_y)c_{i,j}^{n}+\frac{\mu_y}{2}c_{i,j+1}^{n}\right]+\frac{\Delta t}{2}\rho_{i,j}^{n},
\end{align}
where $\mu_x=\frac{\Delta t}{(\Delta x)^2}, \mu_y=\frac{\Delta t}{(\Delta y)^2}$ as in Eq. \eqref{eqn:semi-discrete4ADI_rho1}. If $\varepsilon\geq\mu_y$, the second bracket on the RHS is nonnegative. Subsequently, we can prove that \eqref{eqn:additiveADI_c1} preserves positivity in the same way as we did in the first-order accuracy in time ADI scheme \eqref{eqn:semi-discrete4ADI_c2}-\eqref{eqn:semi-discrete4ADI_c4}. Similarly, if $\varepsilon\geq \mu_x$, then \eqref{eqn:additiveADI_c2} preserves positivity. That is, our scheme \eqref{eqn:additiveADI_c} for the concentration $c$ preserves positivity, if $\varepsilon\geq\max(\mu_x,\mu_y)$.\\
The fully discrete equation for the first-stage of the ADI scheme Eq. \eqref{eqn:additiveADI_rho1} for the density $\rho$ is 
\begin{align}\label{eqn:2nd-scheme1}
    \rho_{i,j}^{n+\frac{1}{2}}&=\frac{\mu_x\sqrt{M_{i,j}^{n+\frac{1}{2}}}}{2}\left[\left(\frac{\rho_{i-1,j}^{n+\frac{1}{2}}}{\sqrt{M_{i-1,j}^{n+\frac{1}{2}}}}\right)+\left(\frac{\rho_{i+1,j}^{n+\frac{1}{2}}}{\sqrt{M_{i+1,j}^{n+\frac{1}{2}}}}\right)-\left( \frac{\sqrt{M_{i-1,j}^{n+\frac{1}{2}}}+\sqrt{M_{i+1,j}^{n+\frac{1}{2}}}}{M_{i,j}^{n+\frac{1}{2}}}\right)\rho_{i,j}^{n+\frac{1}{2}}\right]\nonumber\\
    &+\frac{\mu_y\sqrt{M_{i,j}^{n+\frac{1}{2}}}}{2}\left[\left(\frac{\rho_{i,j-1}^{n}}{\sqrt{M_{i,j-1}^{n+\frac{1}{2}}}}\right)+\left(\frac{\rho_{i,j+1}^{n}}{\sqrt{M_{i,j+1}^{n+\frac{1}{2}}}}\right)-\left( \frac{\sqrt{M_{i,j-1}^{n+\frac{1}{2}}}+\sqrt{M_{i,j+1}^{n+\frac{1}{2}}}}{M_{i,j}^{n+\frac{1}{2}}}\right)\rho_{i,j}^{n}\right]+\rho_{i,j}^{n}.
\end{align}

\begin{align}\label{eqn:cond1}
    1-\frac{\mu_y}{2}\left( \frac{\sqrt{M_{i,j-1}^{n+\frac{1}{2}}}+\sqrt{M_{i,j+1}^{n+\frac{1}{2}}}}{\sqrt{M_{i,j}^{n+\frac{1}{2}}}}\right)\geq 0
\end{align}
holds for arbitrary $i,j$, then the first-step of the additive ADI \eqref{eqn:additiveADI_rho1} preserves positivity. To prove this, we reformulate \eqref{eqn:2nd-scheme1} as
\begin{align}\label{eqn:2nd-scheme2}
    \rho_{i,j}^{n+\frac{1}{2}}&=\frac{\mu_x}{2}\left[ \sqrt{M_{i-1,j}^{n+\frac{1}{2}}M_{i,j}^{n+\frac{1}{2}}}\left( \frac{\rho_{i-1,j}^{n+\frac{1}{2}}}{M_{i-1,j}^{n+\frac{1}{2}}}-\frac{\rho_{i,j}^{n+\frac{1}{2}}}{M_{i,j}^{n+\frac{1}{2}}}\right)+\sqrt{M_{i+1,j}^{n+\frac{1}{2}}M_{i,j}^{n+\frac{1}{2}}}\left(\frac{\rho_{i+1,j}^{n+\frac{1}{2}}}{M_{i+1,j}^{n+\frac{1}{2}}}-\frac{\rho_{i,j}^{n+\frac{1}{2}}}{M_{i,j}^{n+\frac{1}{2}}}\right) \right]\nonumber\\
    &+\left[1-\frac{\mu_y}{2}\left( \frac{\sqrt{M_{i,j-1}^{n+\frac{1}{2}}}+\sqrt{M_{i,j+1}^{n+\frac{1}{2}}}}{\sqrt{M_{i,j}^{n+\frac{1}{2}}}}\right)\right]\rho_{i,j}^{n}+\frac{\mu_y\sqrt{M_{i,j}^{n+\frac{1}{2}}}}{2\sqrt{M_{i,j-1}^{n+\frac{1}{2}}}}\rho_{i,j-1}^{n}+\frac{\mu_y\sqrt{M_{i,j}^{n+\frac{1}{2}}}}{2\sqrt{M_{i,j+1}^{n+\frac{1}{2}}}}\rho_{i,j+1}^{n}.
\end{align}
We assume that $\rho_{i,j}^{n}$ is nonnegative and the condition \eqref{eqn:cond1} holds for all $i,j$, then the last three terms in Eq. \eqref{eqn:2nd-scheme2} are nonnegative. Using the same argument in the proof of Theorem \ref{Thm1} again, let $(k,l)$ be the indices satisfy $\rho_{k,l}^{n+\frac{1}{2}}/M_{k,l}^{n+\frac{1}{2}}=\min_{i,j}\{\rho_{i,j}^{n+\frac{1}{2}}/M_{i,j}^{n+\frac{1}{2}}\}$. The terms in the first bracket of the RHS of Eq. \eqref{eqn:2nd-scheme2} are nonnegative for $(i,j)=(k,l)$. From Eq. \eqref{eqn:2nd-scheme2} and $M_{k,l}^{n+\frac{1}{2}}\geq 0$, we have $\rho_{k,l}^{n+\frac{1}{2}}\geq 0$, then $\rho_{k,l}^{n+\frac{1}{2}}/M_{k,l}^{n+\frac{1}{2}}\geq 0$. That is, $\rho_{i,j}^{n+\frac{1}{2}}/M_{i,j}^{n+\frac{1}{2}}\geq 0$ for all $i,j$ because $\rho_{k,l}^{n+\frac{1}{2}}/M_{k,l}^{n+\frac{1}{2}}\geq 0$ is the minimum value from the definition.\\ 
Similarly, if 
\begin{align}\label{eqn:cond2}
    1-\frac{\mu_x}{2}\left( \frac{\sqrt{M_{i-1,j}^{n+\frac{1}{2}}}+\sqrt{M_{i+1,j}^{n+\frac{1}{2}}}}{\sqrt{M_{i,j}^{n+\frac{1}{2}}}}\right)\geq 0
\end{align}
holds for arbitrary $i,j$, then the second-step of the additive ADI \eqref{eqn:additiveADI_rho2} preserves positivity. That is, the scheme \eqref{eqn:additiveADI_rho} preserves positivity under these two inequality conditions \eqref{eqn:cond1} and \eqref{eqn:cond2}.
\begin{remark}
    We have proved this additive ADI scheme preserving positivity and it is straight forward to verify the mass conservation similar to the previous section. However, we cannot verify the energy dissipation law for the fully discrete scheme \eqref{eqn:additiveADI_c} and \eqref{eqn:additiveADI_rho} since it is hard to deal with $\rho_{i,j}^n/\sqrt{M_{i,j}^{n+\frac{1}{2}}}$ and $\rho_{i,j}^{n+1}/\sqrt{M_{i,j}^{n+\frac{1}{2}}}$.
\end{remark}
\section{ Numerical experiments}
The solution to Keller-Segel equations have several important properties, such as positivity preservation, mass preservation, asymptotic behavior. Furthermore, the solution blows up when the initial mass is great than some critical value. In the following, we verify the order of convergence for two ADI schemes first. Then, we compare the computational time  of our ADI schemes \eqref{eqn:semi-discrete4ADI_c2} and \eqref{eqn:semi-discrete4ADI_rho2} with the standard five-point method \eqref{eqn:semi-discrete4ADI_c1} and \eqref{eqn:semi-discrete4ADI_rho1}. As we will see in the following results, the ADI scheme significantly reduces the computational cost.
\subsection{The order of convergence}\label{sec:convergence}
In this subsection, we focus on checking the order of accuracy of the ADI schemes \eqref{eqn:semi-discrete4ADI_c3}-\eqref{eqn:semi-discrete4ADI_c4} and \eqref{eqn:semi-discrete4ADI_rho11}-\eqref{eqn:semi-discrete4ADI_rho12}. In order to verify the order of convergence, we construct an exact solution of the 2D Keller-Segel equations \eqref{eqn:original_rho} and \eqref{eqn:original_c} with added known terms. We start with the following exact solutions
\begin{align}\label{eqn:exact_solu}
    \rho(x,y,t) = 4e^{-(t+x^2+y^2)}, \qquad c(x,y,t)=e^{-(t+\frac{x^2+y^2}{2})},
\end{align}
in the square domain $\Omega=(-1,1)\times(-1,1)$.

Denoting 
\begin{align}
    F_1(x,y,t)&=\left[c(x,y,t)\left(3x^2+3y^2-2\right)-4\left(x^2+y^2\right)+3\right]\rho(x,y,t), \label{eqn:F1}\\
    F_2(x,y,t)&=\left(2-\varepsilon-x^2-y^2\right)c(x,y,t)-\rho(x,y,t), \label{eqn:F2}
\end{align}
the solution in Eq. \eqref{eqn:exact_solu} satisfies the following modified Keller-Segel system
\begin{align}\label{eqn:forcing_system}
    \partial_t\rho=\Delta\rho-\nabla\cdot\left(\rho\nabla c\right)+F_1,\qquad    
    \varepsilon\partial_t c=\Delta c+\rho+F_2,
\end{align}
where we choose $\varepsilon=1$ in this subsection.

To investigate the spatial convergence order, we fix time step $\Delta t=10^{-6}$ but vary the spatial grid sizes $\Delta x=\Delta y=0.1, 0.05, 0.025,0.0125$. We examine the error in maximum norm at the output time $T=10^{-5}$.
\begin{equation}
    \text{error}_{\Delta x}=\| f_{\Delta x}(T)-f_{exact}(T)\|_{\infty}:=\max\limits_{i,j}|f_{\Delta x}(x_i,y_j,T)-f_{exact}(x_i,y_j,T)|,
\end{equation}
where $f_{\Delta x}$ and $f_{exact}$ denote the numerical solution with mesh size $\Delta x$ and the exact solution respectively.

To investigate the time convergence order, we fix the grid size in space $\Delta x=\Delta y=0.001$ and compare the errors with different time steps $\Delta t=0.05, 0.025, 0.0125, 0.00625$, and the output time is $T=0.1$.
\begin{equation}
    \text{error}_{\Delta t}=\| f_{\Delta t}(T)-f_{exact}(T)\|_{\infty}:=\max\limits_{i,j}|f_{\Delta t}(x_i,y_j,T)-f_{exact}(x_i,y_j,T)|,
\end{equation}
where $f_{\Delta t}$ denotes the numerical solution with time step $\Delta t$.\\
The errors with different mesh sizes and the numerical order of convergence are presented in Table \ref{table:ADIdensity_fixed_t}. The numerical order is calculated as $\log_2{(\text{error}_{2\Delta x}/\text{error}_{\Delta x})}$ and the results show second-order accuracy in space. Table \ref{table:ADIdensity_fixed_x} shows the errors with different time step sizes and the numerical order of convergence in time. Similarly, the numerical order is calculated as $\log_2{(\text{error}_{2\Delta t}/\text{error}_{\Delta t})}$ and the results confirm the first-order accuracy in time.

\begin{table}
  \centering
  \caption{The spatial convergence order of the ADI scheme Eqs. \eqref{eqn:semi-discrete4ADI_c3}-\eqref{eqn:semi-discrete4ADI_c4} and \eqref{eqn:semi-discrete4ADI_rho11}-\eqref{eqn:semi-discrete4ADI_rho12}: Errors of density $\rho$ and $c$ for different mesh sizes.}
  \label{table:ADIdensity_fixed_t}
  \begin{tabular}{ccccc}
    \toprule
    $\Delta x=\Delta y$ & Maximum error in $\rho$ & Order & Maximum error in $c$ & Order \\
    \midrule
    0.1 & 2.1261E-07 & -- & 4.9951E-08 & -- \\
    0.05 & 5.3292E-08 & 1.9962 &1.2530E-08 & 1.9951\\
    0.025 & 1.3335E-08 & 1.9987 &3.1596E-09 &  1.9876\\
    0.0125 & 3.3376E-09 & 1.9983 & 8.1621E-10 & 1.9527\\
    \bottomrule
  \end{tabular}
\end{table}

\begin{table}
  \centering
  \caption{The time convergence order of the ADI scheme Eqs. \eqref{eqn:semi-discrete4ADI_c3}-\eqref{eqn:semi-discrete4ADI_c4} and \eqref{eqn:semi-discrete4ADI_rho11}-\eqref{eqn:semi-discrete4ADI_rho12}: Errors of density $\rho$ and $c$ for different time step sizes.}
\label{table:ADIdensity_fixed_x}
  \begin{tabular}{ccccc}
    \toprule
    $\Delta t$ & Maximum error in $\rho$ & Order & Maximum error in $c$ & Order \\
    \midrule
    0.05 & 0.0093 & -- & 0.0133 & -- \\
    0.025 & 0.0043 & 1.1129 &0.0070 & 0.9260\\
    0.0125 & 0.0021 & 1.0339 &0.0036 &  0.9593\\
    0.00625 & 9.9789E-04 & 1.0734 & 0.0018 & 1.0000\\
    \bottomrule
  \end{tabular}
\end{table}

\subsection{Second-order convergence in time}\label{sec:2nd-order in time}
In this subsection, we follow the same example presented in Sec. \ref{sec:convergence} to verify the order of accuracy of the additive ADI scheme \eqref{eqn:additiveADI_c1}-\eqref{eqn:additiveADI_c2} and \eqref{eqn:additiveADI_rho1}-\eqref{eqn:additiveADI_rho2}.\\
Our goal is to verify the additive ADI scheme for the following equations: 
\begin{align}
    \partial_t\rho&=\Delta\rho-\nabla\cdot(\rho\nabla c)+F_1=\nabla\cdot\left(M\nabla\left(\frac{\rho}{M}\right)\right)+F_1,\\    
    \varepsilon\partial_tc&=\Delta c+\rho+F_2,
\end{align}
where $M=e^c$. The semi-discrete scheme is 
\begin{align}
    \varepsilon\frac{c^{n+\frac{1}{2}}-c^n}{\Delta t/2}&=\frac{1}{(\Delta x)^2}\delta_{x}^2c^{n+\frac{1}{2}}+\frac{1}{(\Delta y)^2}\delta_{y}^2c^{n}+\rho^{n}+F_{2}^{n},\\
    \varepsilon\frac{c^{n+1}-c^{n+\frac{1}{2}}}{\Delta t/2}&=\frac{1}{(\Delta x)^2}\delta_{x}^2c^{n+\frac{1}{2}}+\frac{1}{(\Delta y)^2}\delta_{y}^2c^{n+1}+\rho^{n+1}+F_{2}^{n+1},
\end{align}
and 
\begin{align}
     \frac{\rho^{n+\frac{1}{2}}-\rho^n}{\Delta t/2}&=\sqrt{M^{n+\frac{1}{2}}}\left[\frac{1}{(\Delta x)^2}\bar{\tau}_{x}\left(\frac{\rho^{n+\frac{1}{2}}}{\sqrt{M^{n+\frac{1}{2}}}}\right)+\frac{1}{(\Delta y)^2}\bar{\tau}_{y}\left(\frac{\rho^{n}}{\sqrt{M^{n+\frac{1}{2}}}}\right)\right]+F_{1}^{n},\\
     \frac{\rho^{n+1}-\rho^{n+\frac{1}{2}}}{\Delta t/2}&=\sqrt{M^{n+\frac{1}{2}}}\left[\frac{1}{(\Delta x)^2}\bar{\tau}_{x}\left(\frac{\rho^{n+\frac{1}{2}}}{\sqrt{M^{n+\frac{1}{2}}}}\right)+\frac{1}{(\Delta y)^2}\bar{\tau}_{y}\left(\frac{\rho^{n+1}}{\sqrt{M^{n+\frac{1}{2}}}}\right)\right]+F_{1}^{n+1}.
\end{align}
The semi-discrete scheme is equivalent to 
\begin{align}
    \varepsilon c^{n+\frac{1}{2}}-\frac{\mu_x}{2}\delta_{x}^{2}c^{n+\frac{1}{2}}&=\varepsilon c^n+\frac{\mu_y}{2}\delta_{y}^{2}c^n+\frac{\Delta t}{2}(\rho^n+F_{2}^{n}),\nonumber\\
    \varepsilon c^{n+1}-\frac{\mu_y}{2}\delta_{y}^{2}c^{n+1}&=\varepsilon c^{n+\frac{1}{2}}+\frac{\mu_x}{2}\delta_{x}^{2}c^{n+\frac{1}{2}}+\frac{\Delta t}{2}(\rho^{n+1}+F_{2}^{n+1}),
\end{align}
and 
\begin{align}
    \rho^{n+\frac{1}{2}}-\frac{\mu_x}{2}\sqrt{M^{n+\frac{1}{2}}}\bar{\tau}_x\left( \frac{\rho^{n+\frac{1}{2}}}{\sqrt{M^{n+\frac{1}{2}}}} \right)&= \rho^{n}+\frac{\mu_y}{2}\sqrt{M^{n+\frac{1}{2}}}\bar{\tau}_y\left( \frac{\rho^{n}}{\sqrt{M^{n+\frac{1}{2}}}} \right)+\frac{\Delta t}{2}F_{1}^{n},\nonumber\\
    \rho^{n+1}-\frac{\mu_y}{2}\sqrt{M^{n+\frac{1}{2}}}\bar{\tau}_y\left( \frac{\rho^{n+1}}{\sqrt{M^{n+\frac{1}{2}}}} \right)&= \rho^{n+\frac{1}{2}}+\frac{\mu_x}{2}\sqrt{M^{n+\frac{1}{2}}}\bar{\tau}_x\left( \frac{\rho^{n+\frac{1}{2}}}{\sqrt{M^{n+\frac{1}{2}}}} \right)+\frac{\Delta t}{2}F_{1}^{n+1}.
\end{align}
To investigate the time convergence order, we fix the grid size in space $\Delta x=\Delta y=0.001$ and compare the errors with different time steps $\Delta t=0.01, 0.005, 0.0025, 0.00125$, and the output time is $T=0.04$. We choose the parameter $\varepsilon=1$ in this case. Table \ref{table:additiveADIdensity_fixed_x} shows the errors with different time step sizes, which confirms the second-order accuracy in time.

\begin{table}
  \centering
  \caption{The time convergence order of the additive ADI scheme Eqs. \eqref{eqn:additiveADI_c1}-\eqref{eqn:additiveADI_c2} and \eqref{eqn:additiveADI_rho1}-\eqref{eqn:additiveADI_rho2}: Errors of density $\rho$ and $c$ for different time step sizes.}
\label{table:additiveADIdensity_fixed_x}
  \begin{tabular}{ccccc}
    \toprule
    $\Delta t$ & Maximum error in $\rho$ & Order & Maximum error in $c$ & Order \\
    \midrule
    0.01 & 4.7003E-05 & -- & 1.2824E-05 & -- \\
    0.005 & 1.1076E-05& 2.0853 &3.1129E-06 & 2.0425\\
    0.0025 & 2.5185E-06 & 2.1368 &7.2538E-07 &  2.1014\\
    0.00125 & 6.6010E-07 & 1.9318 & 1.5921E-07 & 2.1878\\
    \bottomrule
  \end{tabular}
\end{table}

\textbf{Stability.}  To numerically investigate the stability of the 2nd-order ADI scheme \eqref{eqn:additiveADI_c1}-\eqref{eqn:additiveADI_rho2}, we compute the solution to the modified Keller-Segel system \eqref{eqn:forcing_system}, the same example as in Section \ref{sec:convergence}. We summarize the relative $L^2$ errors of the density $\rho$ at final time $T=5$ computed by the 2nd-order ADI scheme \eqref{eqn:additiveADI_c1}-\eqref{eqn:additiveADI_rho2} with various mesh sizes and time step sizes in Table \ref{table:2nd-stability_rho}. The numerical results show that the errors mainly come from the time integration for the range of grid sizes and time step sizes. The results are stable for the time step size $\Delta t$ as large as $0.2$ and the grid sizes $\Delta x$ and $\Delta y$  as small as $0.002$, suggesting that the 2nd-order ADI scheme is stable. We obtain similar results for the concentration $c$.  The extrapolation $2\rho^n-\rho^{n-1}$ used in the 2nd-order ADI scheme does not affect the stability. In addition, the first-order semi-discrete scheme \eqref{eqn:semi-discrete_rho} and \eqref{eqn:semi-discrete_c} is shown to be stable for small initial data and $\Delta t \|\nabla  \rho^n \|< 1$ in \cite{Finitedifference}. The numerical stability is verified for the 1st-order ADI scheme \eqref{eqn:semi-discrete4ADI_c2} and \eqref{eqn:semi-discrete4ADI_rho2} from similar tests as above.  

\begin{table}
  \centering
  \caption{The relative $L^2$ errors of the density $\rho$ at final time $T=5$ with various mesh sizes and time step sizes for the 2nd-order ADI scheme \eqref{eqn:additiveADI_c1}-\eqref{eqn:additiveADI_rho2}.}
  \label{table:2nd-stability_rho}
  \begin{tabular}{ccccc}
    \toprule
    $\Delta t$ & $\Delta x=\Delta y=0.05$ & $\Delta x=\Delta y=0.02$ & $\Delta x=\Delta y=0.01$ & $\Delta x=\Delta y=0.002$\\
    \midrule
    0.2 &  0.0080 &  0.0084 & 0.0085 & 0.0086 \\
    0.1 &  0.0017 & 0.0020 & 0.0021 & 0.0021\\
    0.05 & 4.7422E-04 & 4.6599E-04 &5.0646E-04 & 5.2215E-04\\
    0.01 & 5.6766E-04 & 8.1606E-05 &1.9401E-05 & 2.0315E-05\\
    \bottomrule
  \end{tabular}
\end{table}

\subsection{Efficiency}
Blow-up is one of the most important properties of Keller-Segel system \cite{review1,review2}. The solutions of Keller-Segel equations blow up in finite time if the initial mass is large than a critical value. In order to investigate the solutions close to the blow-up time, extremely fine grid is needed. Developing a highly efficient numerical scheme becomes important to reduce computational cost. The original five-point method need to solve \eqref{eqn:semi-discrete4ADI_c1} and \eqref{eqn:semi-discrete4ADI_rho1} require solving four sparse $N_xN_y\times N_xN_y$ linear systems using an iterative method such as conjugate gradient method. It still results in relatively high computational cost, especially in the case of higher dimensions. It is well known that ADI schemes are a type of fast direct method to solve 2D or higher dimensional parabolic partial differential equations. For the first-step of the ADI scheme for the density \eqref{eqn:semi-discrete4ADI_rho11}, we only need to solve $N_y$ decoupled tridiagonal linear systems of size $N_x\times N_x$. The computational complexity is similar for the rest substeps of the ADI scheme Eqs. \eqref{eqn:semi-discrete4ADI_rho12}, \eqref{eqn:semi-discrete4ADI_c3} and \eqref{eqn:semi-discrete4ADI_c4}. In this subsection, we compare the computational efficiency of our ADI scheme with that of the five-point scheme. Here we use the conjugate gradient method to solve the linear system for the original five-point scheme. \\
To compare the computational costs, we use the example in Subsection \ref{sec:convergence} with known exact solutions. We take domain $\Omega=[-5,5]\times[-5,5]$, time step $\Delta t=0.001$, and the final time is $T=1$, and  different space grid sizes $N_x=N_y=80,160,320,640$ respectively. The running times of our ADI scheme \eqref{eqn:semi-discrete4ADI_c3}-\eqref{eqn:semi-discrete4ADI_c4} and \eqref{eqn:semi-discrete4ADI_rho11}-\eqref{eqn:semi-discrete4ADI_rho12} and the original method \eqref{eqn:semi-discrete4ADI_c1}, \eqref{eqn:semi-discrete4ADI_rho1} are presented in Table \ref{table:efficiency}. The execution times are measured using MATLAB scripts on a personal computer with Intel(R) Core(TM) i5-8265U CPU @ 1.60GHz 1.80GHz and 8 GB RAM. The ADI scheme reduces the computational cost dramatically, as we expect. Both running times increase linearly with the number of unknowns, $N=2N_xN_y$ until the original scheme runs into the limitation of the PC memory. The ADI scheme is about seven times faster than that of the original one. It is worth pointing out the linear systems corresponding to the ADI scheme are solved directly and the solutions to the linear systems are accurate up to round-off errors, while the tolerance of the CG iterative solver is set to be $10^{-10}$ thus the numerical results from the original scheme are less accurate. Therefore, the ADI scheme could be used to capture more accurate asymptotic behavior near the blow-up of the solutions.

\begin{table}
  \centering
  \caption{Comparison between the running times in (seconds) of the ADI scheme \eqref{eqn:semi-discrete4ADI_c3}-\eqref{eqn:semi-discrete4ADI_c4} and \eqref{eqn:semi-discrete4ADI_rho11}-\eqref{eqn:semi-discrete4ADI_rho12} and those of the original five-point scheme \eqref{eqn:semi-discrete4ADI_c1} and \eqref{eqn:semi-discrete4ADI_rho1} (ORIG for short) with different mesh sizes. $N$ denotes the total number of unknowns in the system.}
\label{table:efficiency}
  \begin{tabular}{ccc}
    \toprule
    $N$ & ADI & ORIG  \\
    \midrule
    $2\cdot80^2$ & 3.34s & 20.34s\\
    $2\cdot160^2$ & 10.55s& 73.60s \\
    $2\cdot320^2$ & 38.05s & 296.76s \\
    $2\cdot640^2$ & 145.69s & 1530.91s \\
    \bottomrule
  \end{tabular}
\end{table}

\subsection{An illustrative example}
To demonstrate the conserved properties of the numerical schemes, such as the mass conservation and the energy dissipation law, in this subsection we present the evolution of the relevant quantities by solving the 2D Keller-Segel equations \eqref{eqn:original_rho} and \eqref{eqn:original_c} with periodic or Neumann-type boundary conditions. 

Consider the original Keller-Segel equations \eqref{eqn:original_rho} and \eqref{eqn:original_c} with the following initial conditions:
\begin{align}
    \rho(x,y,0) = 50 e^{-60(x^2+y^2)},\qquad
    c(x,y,0) = 50 e^{-30(x^2+y^2)}.
\label{eqn:icie1}
\end{align}
The nonnegative constant is chosen as $\varepsilon=1$ and the spatial domain $\Omega=(-1,1)\times (-1,1)$. We set the mesh sizes $\Delta x=\Delta y=0.02$ and the time step size $\Delta t=0.0001$. In order to verify the proposed ADI scheme \eqref{eqn:semi-discrete4ADI_c2}, \eqref{eqn:semi-discrete4ADI_rho2} is positivity-preserving, we plot the evolution of the quantities $\min\limits_{i,j}\{\rho_{i,j}\}$ and $\min\limits_{i,j}\{c_{i,j}\}$ in Figure \ref{fig:minVariables}. The quantities that lead to the conservation of mass defined in \eqref{eqn:mass-tau_x}, \eqref{eqn:mass-tau_xy} together with the total mass $\rho_{\text{tot}}(t) := \int_{\Omega}\rho\, dxdy$ and $c_{\text{tot}}(t) =  \int_{\Omega}c\, dxdy$ for periodic or the vanishing Neumann boundary conditions are also shown in Figure \ref{fig:tau_ADI} and Figure \ref{fig:totalmass}, respectively. 
The numerical results show that the mass is conserved up to the round-off level up to the time $t=2$. Note that the total mass of $c$, $c_{\text{tot}}(t)$ increases linearly in time according to $ c_{\text{tot}}(t) = c_{\text{tot}}(0) + \rho_{\text{tot}} (0) t$. In Figure~\ref{fig:energy_ADI_periodic}, we show the evolution of the discrete free energy $\mathcal{E}^n$ defined by \eqref{eqn:fully_discrete_energy_1} for this case with periodic boundary condition and compare the energy decay rate $(\mathcal{E}^{n+1}-\mathcal{E}^n)/\Delta t$ and the RHS of \eqref{eqn:fullyenergy} divided by $\Delta t$. The numerical results show that the inequality \eqref{eqn:fullyenergy} holds all the time. The numerical results for vanishing Neumann boundary conditions are similar as shown in Figure~\ref{fig:energy_ADI_Neumann}.

\begin{figure}
\centering
\includegraphics[trim={0cm 0cm 0cm 0cm},clip,width=\textwidth]{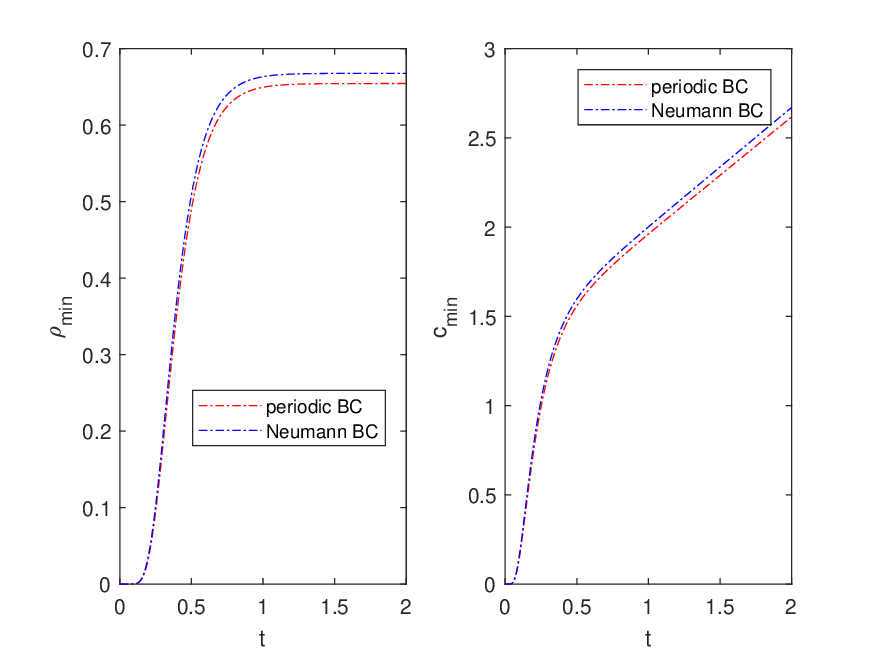}
\caption{The evolution of the minimum values of the density $\rho$ and the concentration $c$ for the solution of the Keller-Segel equations \eqref{eqn:original_rho} and \eqref{eqn:original_c} with $\epsilon=1$, the initial condition \eqref{eqn:icie1} and the periodic or vanishing Neumann boundary conditions.  (Left) $\rho_{\text{min}}:=\min\limits_{i,j}\{\rho_{i,j}\}$. (Right) $c_{\text{min}}:=\min\limits_{i,j}\{c_{i,j}\}$. }
\label{fig:minVariables}
\end{figure}

\begin{figure}
\centering
\includegraphics[trim={0cm 0cm 0cm 0cm},clip,width=\textwidth]{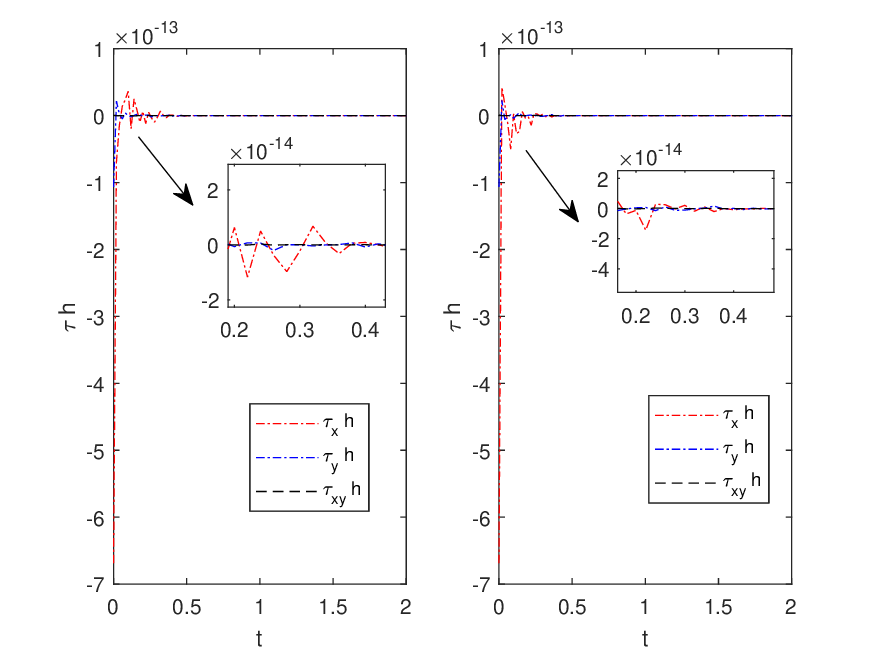}
\caption{The quantities defined in \eqref{eqn:mass-tau_x} and \eqref{eqn:mass-tau_xy} for the solution of the Keller-Segel equations \eqref{eqn:original_rho} and \eqref{eqn:original_c} with $\epsilon=1$ and the initial condition \eqref{eqn:icie1}. (Left) Periodic boundary conditions. (Right) Vanishing Neumann boundary conditions.}
\label{fig:tau_ADI}
\end{figure}

\begin{figure}
\centering
\includegraphics[trim={0cm 0cm 0cm 0cm},clip,width=\textwidth]{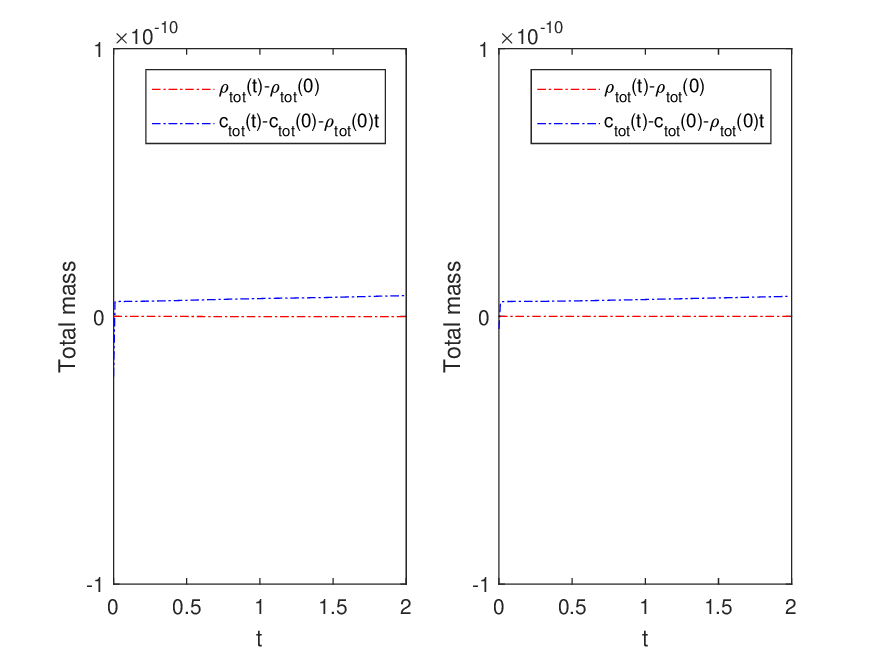}
\caption{The evolution of the total mass compared with the initial values, namely $\rho_{\text{tot}} (t) - \rho_{\text{tot}}(0)$ and $c_{\text{tot}} (t) - c_{\text{tot}}(0) - \rho_{\text{tot}}(0) t$,  for the solution of the Keller-Segel equations \eqref{eqn:original_rho} and \eqref{eqn:original_c} with $\epsilon=1$ and the initial condition \eqref{eqn:icie1}.  (Left) Periodic boundary conditions. (Right) Vanishing Neumann boundary conditions.}
\label{fig:totalmass}
\end{figure}

\begin{figure}
\centering
\includegraphics[trim={0cm 0cm 0cm 0cm},clip,width=\textwidth]{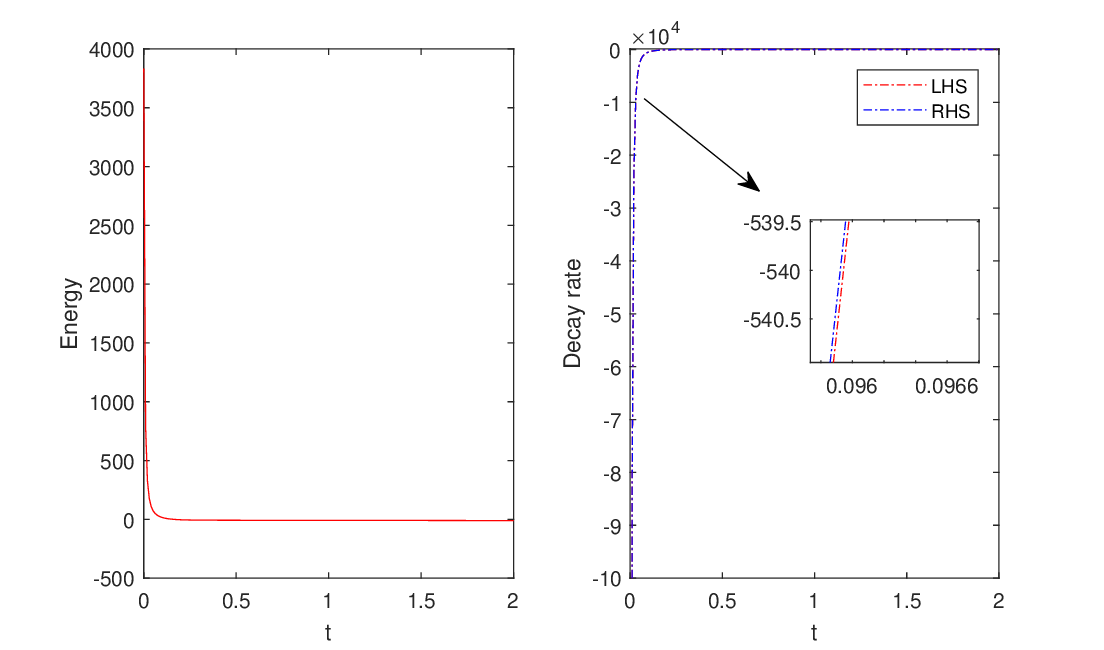}
\caption{(Left) The evolution of the discrete free energy $\mathcal{E}^n$  \eqref{eqn:fully_discrete_energy_1} for the solution of the Keller-Segel equations \eqref{eqn:original_rho} and \eqref{eqn:original_c} with $\epsilon=1$ and the initial condition \eqref{eqn:icie1} with periodic boundary conditions. (Right) The corresponding the energy decay rate $(\mathcal{E}^{n+1}-\mathcal{E}^n)/\Delta t$ and the RHS of \eqref{eqn:fullyenergy} divided by $\Delta t$.}
\label{fig:energy_ADI_periodic}
\end{figure}

\begin{figure}
\centering
\includegraphics[trim={0cm 0cm 0cm 0cm},clip,width=\textwidth]{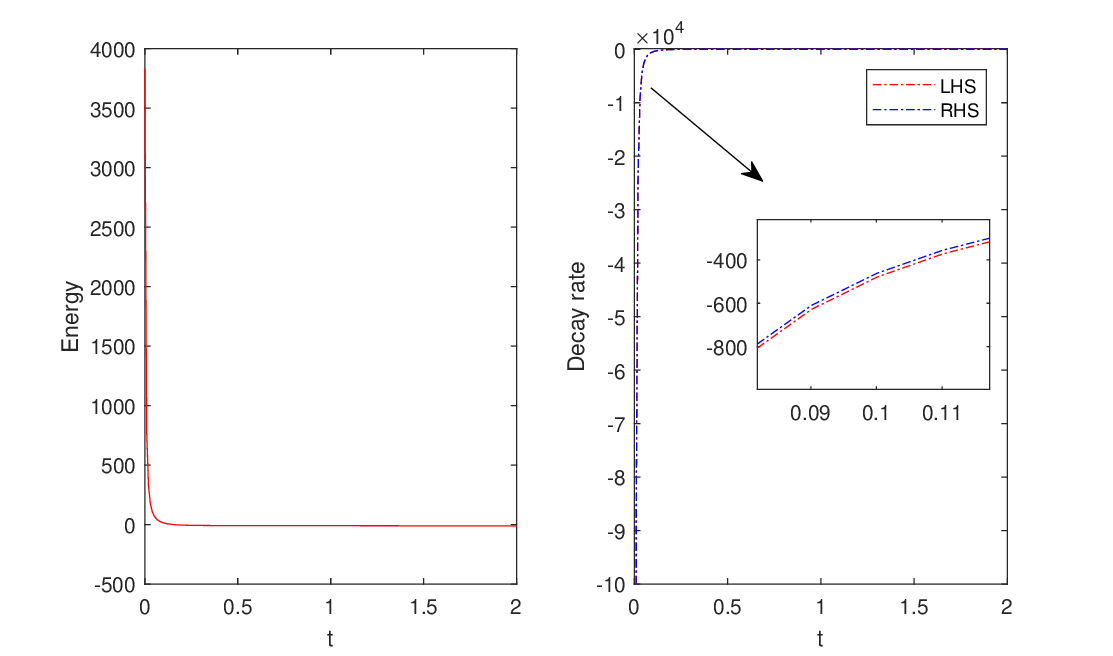}
\caption{(Left) The evolution of the discrete free energy $\mathcal{E}^n$  \eqref{eqn:fully_discrete_energy_1} for the solution of the Keller-Segel equations \eqref{eqn:original_rho} and \eqref{eqn:original_c} with $\epsilon=1$ and the initial condition \eqref{eqn:icie1} with vanishing Neumann boundary conditions. (Right) The corresponding the energy decay rate $(\mathcal{E}^{n+1}-\mathcal{E}^n)/\Delta t$ and the RHS of \eqref{eqn:fullyenergy} divided by $\Delta t$. }
\label{fig:energy_ADI_Neumann}
\end{figure}

\section{Conclusion}
We have presented two efficient finite difference schemes for solving the 2D Keller-Segel equations based on ADI method. Both schemes achieve second-order accuracy in space. One of the schemes exhibits first-order accuracy in time while unconditionally preserving positivity and the energy dissipation law asymptotically. The other scheme achieves second-order accuracy in time, but it only preserves positivity under certain conditions. Furthermore, we demonstrate that our schemes outperform the standard five-point scheme in terms of computational cost.

There are still important aspects that require further investigation. First, although the additive ADI scheme attains second-order accuracy in time, its ability to preserve positivity is conditional. This limitation hinders its widespread adoption. Second, the energy dissipation law for the first-order ADI scheme holds asymptotically rather than exactly. Third, we can deduce the Keller-Segel equations from the respective free energy using the energetic variational approach. This approach offers an alternative method for investigating the blow-up solutions of the Keller-Segel equations, known as the Lagrangian scheme. Additionally, we employ the semi-implicit scheme to decouple the two nonlinear partial equations. However, it is important to note that the explicit components of the semi-implicit scheme may introduce instability issues when dealing with large initial mass conditions. Further investigation is warranted in this regard.

\section*{Acknowledgments}
This work is partially supported by DOE DE-SC0022276. C Liu's research is partially supported by NSF grants DMS-1950868, DMS-2153029 and DMS-2118181. Lu would like to thank Yiwei Wang for helpful discussions.


\begin{thebibliography}{10}

\bibitem{finite-volume5}
L.~Almeida, F.~Bubba, B.~Perthame, and C.~Pouchol.
\newblock Energy and implicit discretization of the fokker-planck and keller-segel type equations.
\newblock {\em Networks and Heterogeneous Media}, 14(1):23--41, 2019.

\bibitem{review1}
N.~Bellomo, A.~Bellouquid, Y.~Tao, and M.~Winkler.
\newblock Toward a mathematical theory of {Keller–Segel} models of pattern formation in biological tissues.
\newblock {\em Mathematical Models and Methods in Applied Sciences}, 25(09):1663--1763, 2015.

\bibitem{finite-volume1}
M.~Bessemoulin-Chatard and A.~Jüngel.
\newblock {A finite volume scheme for a Keller–Segel model with additional cross-diffusion}.
\newblock {\em IMA Journal of Numerical Analysis}, 34(1):96--122, 2013.

\bibitem{blow-up1}
V.~Calvez and L.~Corrias.
\newblock {The parabolic-parabolic Keller-Segel model in $\mathbb{R}^2$}.
\newblock {\em Communications in Mathematical Sciences}, 6(2):417 -- 447, 2008.

\bibitem{finite-volume3}
J.~A. Carrillo, A.~Chertock, and Y.~Huang.
\newblock A finite-volume method for nonlinear nonlocal equations with a gradient flow structure.
\newblock {\em Communications in Computational Physics}, 17(1):233–258, 2015.

\bibitem{finite-volume4}
A.~Chertock and A.~Kurganov.
\newblock {A second-order positivity preserving central-upwind scheme for chemotaxis and haptotaxis models}.
\newblock {\em Numerische Mathematik}, 111:169--205, 2008.

\bibitem{convexsplitting}
C.~M. Elliott and A.~M. Stuart.
\newblock The global dynamics of discrete semilinear parabolic equations.
\newblock {\em SIAM Journal on Numerical Analysis}, 30(6):1622--1663, 1993.

\bibitem{hybrid}
Y.~Epshteyn.
\newblock Upwind-difference potentials method for {Patlak-Keller-Segel} chemotaxis model.
\newblock {\em Journal of Scientific Computing}, 53:689--713, 2012.

\bibitem{DiscontinuousFiniteElement}
Y.~Epshteyn and A.~Izmirlioglu.
\newblock Fully discrete analysis of a discontinuous finite element method for the {Keller-Segel} chemotaxis model.
\newblock {\em Journal of Scientific Computing}, 40:211--256, 2009.

\bibitem{DiscontinuousGalerkin1}
Y.~Epshteyn and A.~Kurganov.
\newblock New interior penalty discontinuous {Galerkin} methods for the {Keller–Segel} chemotaxis model.
\newblock {\em SIAM Journal on Numerical Analysis}, 47(1):386--408, 2009.

\bibitem{Li_2017}
A.~Flavell, J.~Kabre, and X.~Li.
\newblock An energy-preserving discretization for the {Poisson}–{Nernst}–{Planck} equations.
\newblock {\em Journal of Computational Electronics}, 16(2):431--441, June 2017.

\bibitem{forster2013mathematical}
J.~Forster.
\newblock {\em Mathematical modeling of complex fluids}.
\newblock PhD thesis, Universit{\"a}t W{\"u}rzburg, 2013.

\bibitem{giga2017variational}
M.-H. Giga, A.~Kirshtein, and C.~Liu.
\newblock Variational modeling and complex fluids.
\newblock {\em Handbook of mathematical analysis in mechanics of viscous fluids}, pages 1--41, 2017.

\bibitem{review2}
T.~Hillen and K.~J. Painter.
\newblock A user’s guide to pde models for chemotaxis.
\newblock {\em Journal of Mathematical Biology}, 58:183--217, 2009.

\bibitem{review3}
D.~Horstmann.
\newblock {From 1970 until present\,:\,the {Keller-Segel} model in chemotaxis and its consequences I.}
\newblock {\em Jahresbericht der Deutschen Mathematiker-Vereinigung}, 105(3):103--165, 2003.

\bibitem{SVA}
F.~Huang and J.~Shen.
\newblock Bound/positivity preserving and energy stable scalar auxiliary variable schemes for dissipative systems: Applications to {Keller-Segel} and {Poisson-Nernst-Planck} equations.
\newblock {\em SIAM Journal on Scientific Computing}, 43(3):A1832--A1857, 2021.

\bibitem{Keller-Segel1}
E.~F. Keller and L.~A. Segel.
\newblock Initiation of slime mold aggregation viewed as an instability.
\newblock {\em Journal of Theoretical Biology}, 26(3):399--415, 1970.

\bibitem{Keller-Segel}
E.~F. Keller and L.~A. Segel.
\newblock Traveling bands of chemotactic bacteria: A theoretical analysis.
\newblock {\em Journal of Theoretical Biology}, 30(2):235--248, 1971.

\bibitem{finite-volume6}
A.~Kurganov and M.~Lukáčová-Medvidová.
\newblock Numerical study of two-species chemotaxis models.
\newblock {\em Discrete and Continuous Dynamical Systems - B}, 19(1):131--152, 2014.

\bibitem{DiscontinuousGalerkin2}
X.~H. Li, C.-W. Shu, and Y.~Yang.
\newblock Local discontinuous {Galerkin} method for the {Keller-Segel} chemotaxis model.
\newblock {\em Journal of Scientific Computing}, 73:943--967, 2017.

\bibitem{LiuC2023}
C.~Liu, C.~Wang, S.~M. Wise, X.~Yue, and S.~Zhou.
\newblock A second order accurate, positivity preserving numerical method for the {Poisson-Nernst-Planck} system and its convergence analysis.
\newblock {\em Journal of Scientific Computing}, 97(1):23, Sep 2023.

\bibitem{liu2020variational}
C.~Liu and Y.~Wang.
\newblock A variational {Lagrangian} scheme for a phase-field model: A discrete energetic variational approach.
\newblock {\em SIAM Journal on Scientific Computing}, 42(6):B1541--B1569, 2020.

\bibitem{LiuHL2021}
H.~Liu and W.~Maimaitiyiming.
\newblock Efficient, positive, and energy stable schemes for multi-{D} {Poisson-Nernst-Planck} systems.
\newblock {\em Journal of Scientific Computing}, 87(3):92, May 2021.

\bibitem{LiuHL2023}
H.~Liu and W.~Maimaitiyiming.
\newblock A dynamic mass transport method for {Poisson-Nernst-Planck} equations.
\newblock {\em Journal of Computational Physics}, 473:111699, 2023.

\bibitem{Finitedifference}
J.-G. Liu, L.~Wang, and Z.~Zhou.
\newblock Positivity-preserving and asymptotic preserving method for 2d {Keller-Segal} equations.
\newblock {\em Mathematics of Computation}, 87(311):1165--1189, 2018.

\bibitem{Patlak}
C.~S. Patlak.
\newblock {Random walk with persistence and external bias}.
\newblock {\em The bulletin of mathematical biophysics}, 15:311--338, 1953.

\bibitem{Booktransport}
B.~Perthame.
\newblock {\em Transport Equations in Biology}.
\newblock Frontiers in Mathematics. Birkh{\"a}user Basel, 2006.

\bibitem{spectral}
J.~Shen and J.~Xu.
\newblock Unconditionally bound preserving and energy dissipative schemes for a class of {Keller--Segel} equations.
\newblock {\em SIAM Journal on Numerical Analysis}, 58(3):1674--1695, 2020.

\bibitem{ShenJ2021}
J.~Shen and J.~Xu.
\newblock Unconditionally positivity preserving and energy dissipative schemes for {Poisson-Nernst-Planck} equations.
\newblock {\em Numerische Mathematik}, 148(3):671--697, Jul 2021.

\bibitem{SAV}
J.~Shen, J.~Xu, and J.~Yang.
\newblock A new class of efficient and robust energy stable schemes for gradient flows.
\newblock {\em SIAM Review}, 61(3):474--506, 2019.

\bibitem{stabilization1}
J.~Shen and X.~Yang.
\newblock Numerical approximations of {Allen-Cahn} and {Cahn-Hilliard} equations.
\newblock {\em Discrete and Continuous Dynamical Systems}, 28(4):1669--1691, 2010.

\bibitem{IEQ1}
X.~Yang.
\newblock Linear, first and second-order, unconditionally energy stable numerical schemes for the phase field model of homopolymer blends.
\newblock {\em Journal of Computational Physics}, 327:294--316, 2016.

\bibitem{IEQ2}
J.~Zhao, Q.~Wang, and X.~Yang.
\newblock Numerical approximations for a phase field dendritic crystal growth model based on the invariant energy quadratization approach.
\newblock {\em International Journal for Numerical Methods in Engineering}, 110(3):279--300, 2017.

\bibitem{finite-volume2}
G.~Zhou and N.~Saito.
\newblock {Finite volume methods for a Keller–Segel system: discrete energy, error estimates and numerical blow-up analysis}.
\newblock {\em Numerische Mathematik}, 135:265--311, 2017.

\bibitem{stabilization2}
J.~Zhu, L.-Q. Chen, J.~Shen, and V.~Tikare.
\newblock Coarsening kinetics from a variable-mobility {Cahn-Hilliard} equation: Application of a semi-implicit fourier spectral method.
\newblock {\em Phys. Rev. E}, 60:3564--3572, Oct 1999.

\end{thebibliography}

\setcounter{equation}{0}
\renewcommand{\theequation}{\Alph{section}.\arabic{equation}}
\newpage
\appendix
\markboth{Appendix}{Appendix}
\onecolumn

\section{Detailed Expressions for the Operators $\tau_x,\tau_y$ and $\tau_x\tau_y$}\label{APP}

In order to derive the truncation errors and prove the positivity preservation and mass conservation, we expand the operators $\tau_x,\tau_y$ and $\tau_x\tau_y$ fully
\begin{align}\label{eqn:taux}
    \tau_x h_{i,j}^{n+1}:=&\frac{1}{\sqrt{M_{i,j}^{n+1}}}\delta_x\left(M_{i,j}^{n+1}\delta_x\left(\frac{h^{n+1}}{\sqrt{M^{n+1}}}\right)_{i,j}\right)\nonumber\\
    =&\frac{1}{\sqrt{M_{i,j}^{n+1}}}\left[\sqrt{M_{i,j}^{n+1}M_{i+1,j}^{n+1}}\left(\frac{h_{i+1,j}^{n+1}}{\sqrt{M_{i+1,j}^{n+1}}}-\frac{h_{i,j}^{n+1}}{\sqrt{M_{i,j}^{n+1}}}\right) - \sqrt{M_{i-1,j}^{n+1}M_{i,j}^{n+1}}\left(\frac{h_{i,j}^{n+1}}{\sqrt{M_{i,j}^{n+1}}}-\frac{h_{i-1,j}^{n+1}}{\sqrt{M_{i-1,j}^{n+1}}}\right)\right].
\end{align}
Similarly, we have
\begin{align}\label{eqn:tauy}
    \tau_y h_{i,j}^{n+1}:=&\frac{1}{\sqrt{M_{i,j}^{n+1}}}\delta_y\left(M_{i,j}^{n+1}\delta_y\left(\frac{h_{i,j}^{n+1}}{\sqrt{M_{i,j}^{n+1}}}\right)\right)\nonumber\\
    =&\frac{1}{\sqrt{M_{i,j}^{n+1}}}\left[\sqrt{M_{i,j}^{n+1}M_{i,j+1}^{n+1}}\left(\frac{h_{i,j+1}^{n+1}}{\sqrt{M_{i,j+1}^{n+1}}}-\frac{h_{i,j}^{n+1}}{\sqrt{M_{i,j}^{n+1}}}\right) - \sqrt{M_{i,j-1}^{n+1}M_{i,j}^{n+1}}\left(\frac{h_{i,j}^{n+1}}{\sqrt{M_{i,j}^{n+1}}}-\frac{h_{i,j-1}^{n+1}}{\sqrt{M_{i,j-1}^{n+1}}}\right)\right].
\end{align}
For the composite operator $\tau_x\tau_y$, we have
\begin{align}\label{eqn:tauxy}
    \tau_x\tau_y h_{i,j}^{n+1}=&\frac{1}{\sqrt{M_{i,j}^{n+1}}}\delta_x\left[M_{i,j}^{n+1}\delta_x\left(\frac{\tau_y h}{\sqrt{M}}\right)_{i,j}^{{\color{blue}n+1}}\right]\nonumber\\
    =&\frac{M_{i+\frac{1}{2},j}^{n+1}}{\sqrt{M_{i,j}^{n+1}}}\left[\left(\frac{\tau_yh}{\sqrt{M}}\right)_{i+1,j}^{n+1}-\left(\frac{\tau_yh}{\sqrt{M}}\right)_{i,j}^{n+1}\right]\nonumber\\
    &-\frac{M_{i-\frac{1}{2},j}^{n+1}}{\sqrt{M_{i,j}^{n+1}}}\left[\left(\frac{\tau_yh}{\sqrt{M}}\right)_{i,j}^{n+1}-\left(\frac{\tau_yh}{\sqrt{M}}\right)_{i-1,j}^{n+1}\right],
\end{align}
where the first term on the right-hand side is 
\begin{align}\label{eqn:part1_composite_term}
    &\frac{M_{i+\frac{1}{2},j}^{n+1}}{\sqrt{M_{i,j}^{n+1}}}\left[\left(\frac{\tau_yh}{\sqrt{M}}\right)_{i+1,j}^{n+1}-\left(\frac{\tau_yh}{\sqrt{M}}\right)_{i,j}^{n+1}\right]\nonumber\\
    =&\sqrt{M_{i+1,j+1}^{n+1}}\left[\left(\frac{h}{\sqrt{M}}\right)_{i+1,j+1}^{n+1}-\left(\frac{h}{\sqrt{M}}\right)_{i+1,j}^{n+1}\right]\nonumber\\
    &-\sqrt{M_{i+1,j-1}^{n+1}}\left[\left(\frac{h}{\sqrt{M}}\right)_{i+1,j}^{n+1}-\left(\frac{h}{\sqrt{M}}\right)_{i+1,j-1}^{n+1}\right]\nonumber\\
    &-\frac{\sqrt{M_{i+1,j}^{n+1}M_{i,j+1}^{n+1}}}{\sqrt{M_{i,j}^{n+1}}}\left[\left(\frac{h}{\sqrt{M}}\right)_{i,j+1}^{n+1}-\left(\frac{h}{\sqrt{M}}\right)_{i,j}^{n+1}\right]\nonumber\\
    &+\frac{\sqrt{M_{i+1,j}^{n+1}M_{i,j-1}^{n+1}}}{\sqrt{M_{i,j}^{n+1}}}\left[\left(\frac{h}{\sqrt{M}}\right)_{i,j}^{n+1}-\left(\frac{h}{\sqrt{M}}\right)_{i,j-1}^{n+1}\right],
\end{align}
and the second term on the right-hand side is 
\begin{align}\label{eqn:part2_composite_term}
    &\frac{M_{i-\frac{1}{2},j}^{n+1}}{\sqrt{M_{i,j}^{n+1}}}\left[\left(\frac{\tau_yh}{\sqrt{M}}\right)_{i,j}^{n+1}-\left(\frac{\tau_yh}{\sqrt{M}}\right)_{i-1,j}^{n+1}\right]\nonumber\\
    =&\frac{\sqrt{M_{i-1,j}^{n+1}M_{i,j+1}^{n+1}}}{\sqrt{M_{i,j}^{n+1}}}\left[\left(\frac{h}{\sqrt{M}}\right)_{i,j+1}^{n+1} - \left(\frac{h}{\sqrt{M}}\right)_{i,j}^{n+1}\right]\nonumber\\
    &-\frac{\sqrt{M_{i-1,j}^{n+1}M_{i,j-1}^{n+1}}}{\sqrt{M_{i,j}^{n+1}}}\left[\left(\frac{h}{\sqrt{M}}\right)_{i,j}^{n+1} - \left(\frac{h}{\sqrt{M}}\right)_{i,j-1}^{n+1}\right]\nonumber\\
    &-\sqrt{M_{i-1,j+1}^{n+1}}\left[\left(\frac{h}{\sqrt{M}}\right)_{i-1,j+1}^{n+1} - \left(\frac{h}{\sqrt{M}}\right)_{i-1,j}^{n+1}\right]\nonumber\\
    &+\sqrt{M_{i-1,j-1}^{n+1}}\left[\left(\frac{h}{\sqrt{M}}\right)_{i-1,j}^{n+1} - \left(\frac{h}{\sqrt{M}}\right)_{i-1,j-1}^{n+1}\right].
\end{align}
Here, $M_{i-\frac{1}{2},j}^{n+1}$ is approximated by its geometric mean $\sqrt{M_{i-1,j}M_{i,j}}$ and similarly for the other terms at half grid points.

\section{Truncation errors}\label{errors}
\subsection{Truncation errors of the concentration equation}
Using the Taylor series expansion, we know that
\begin{align}
    \Delta_{+t}v(x,y,t)&=v(x,y,t+\Delta t)-v(x,y,t)\nonumber\\
    &=v_t\Delta t+\frac{1}{2}v_{tt}(\Delta t)^2+\frac{1}{6}v_{ttt}(\Delta t)^3+h.o.t.,
\end{align}
\begin{align}
    \delta_{x}v(x,y,t)&=v(x+\frac{1}{2}\Delta x,y,t)-v(x-\frac{1}{2}\Delta x,y,t)\nonumber\\
    &=v_x(\Delta x)+\frac{1}{24}v_{xxx}(\Delta x)^3+h.o.t.,\\
    \delta_{y}v(x,y,t)&=v(x,y+\frac{1}{2}\Delta y,t)-v(x,y-\frac{1}{2}\Delta y,t)\nonumber\\
    &=v_y(\Delta y)+\frac{1}{24}v_{yyy}(\Delta y)^3+h.o.t.,
\end{align}
and 
\begin{align}
    \delta_{x}^{2}v(x,y,t)&=v(x+\Delta x,y,t)-2v(x,y,t)+v(x-\Delta x,y,t)\nonumber\\
    &=v_{xx}(\Delta x)^2+\frac{1}{12}v_{xxxx}(\Delta x)^4+h.o.t.,\\
    \delta_{y}^{2}v(x,y,t)&=v(x,y+\Delta y,t)-2v(x,y,t)+v(x,y-\Delta y,t)\nonumber\\
    &=v_{yy}(\Delta y)^2+\frac{1}{12}v_{yyyy}(\Delta y)^4+h.o.t.,
\end{align}
where h.o.t. stands for higher-order terms. Therefore, the truncation error $T_{c_1}$ of the original five-point scheme \eqref{eqn:semi-discrete4ADI_c1} at $(x,y,t+\Delta t)$:
\begin{align}
    T_{c_1}(x,y,t+\Delta t):=&\varepsilon\frac{\Delta_{-t}c(x,y,t+\Delta t)}{\Delta t}-\frac{\delta_{x}^{2}c(x,y,t+\Delta t)}{(\Delta x)^2}-\frac{\delta_{y}^{2}c(x,y,t+\Delta t)}{(\Delta y)^2}-\rho(x,y,t)\nonumber\\
    =&\varepsilon\frac{c_t\Delta t-\frac{1}{2}c_{tt}(\Delta t)^2+\frac{1}{6}c_{ttt}(\Delta t)^3}{\Delta t} -\frac{c_{xx}(\Delta x)^2+\frac{1}{12}c_{xxxx}(\Delta x)^4}{(\Delta x)^2}\nonumber\\
    &-\frac{c_{yy}(\Delta y)^2+\frac{1}{12}c_{yyyy}(\Delta y)^4}{(\Delta y)^2}-\rho+\rho_t(\Delta t)-\frac{1}{2}\rho_{tt}(\Delta t)^2+h.o.t.\nonumber\\
    =&(\varepsilon c_t-\Delta c-\rho) -\frac{\varepsilon}{2}c_{tt}(\Delta t)+\frac{\varepsilon}{6}c_{ttt}(\Delta t)^2-\frac{1}{12}c_{xxxx}(\Delta x)^2-\frac{1}{12}c_{yyyy}(\Delta y)^2\nonumber\\
    &+\rho_t(\Delta t)-\frac{1}{2}\rho_{tt}(\Delta t)^2+h.o.t.\nonumber\\
    =&-\frac{\varepsilon}{2}c_{tt}(\Delta t)+\frac{\varepsilon}{6}c_{ttt}(\Delta t)^2-\frac{1}{12}c_{xxxx}(\Delta x)^2-\frac{1}{12}c_{yyyy}(\Delta y)^2+\rho_t(\Delta t)-\frac{1}{2}\rho_{tt}(\Delta t)^2+h.o.t.\nonumber\\
    =&O\left(\Delta t+(\Delta x)^2+(\Delta y)^2\right).
\end{align}
Similarly, the truncation error $T_{c_2}$ of the ADI scheme \eqref{eqn:semi-discrete4ADI_c2} at $(x,y,t+\Delta t)$:
\begin{align}
    T_{c_2}(x,y,t+\Delta t)=\varepsilon\frac{\Delta_{-t}c}{\Delta t}-\frac{\delta_{x}^{2}c}{(\Delta x)^2}-\frac{\delta_{y}^{2}c}{(\Delta y)^2}+\frac{\Delta t\delta_{x}^{2}\delta_{y}^{2}c}{\varepsilon(\Delta x\Delta y)^2}-\rho(x,y,t).
\end{align}
We only need to calculate $\delta_{x}^{2}\delta_{y}^{2}c$. By the definitions of $\delta_{x}^2$ and $\delta_{y}^2$, we have
\begin{align}
    \delta_{x}^{2}\delta_{y}^{2}c=&\delta_{x}^{2}\big[ c(x,y+\Delta y,t+\Delta t)-2c(x,y,t+\Delta t)+c(x,y-\Delta y,t+\Delta t)\big]\nonumber\\
    =&c(x+\Delta x,y+\Delta y,t+\Delta t)-2c(x,y+\Delta y,t+\Delta t)+c(x-\Delta x,y+\Delta y,t+\Delta t)\nonumber\\
    &-2\big[ c(x+\Delta x,y,t+\Delta t)-2c(x,y,t+\Delta t)+c(x-\Delta x,y,t+\Delta t)\big]\nonumber\\
    &+c(x+\Delta x,y-\Delta y,t+\Delta t)-2c(x,y-\Delta y,t+\Delta t)+c(x-\Delta x,y-\Delta y,t+\Delta t)\nonumber\\
    =&c_{xxyy}(\Delta x\Delta y)^2+h.o.t..
\end{align}
Therefore, the truncation error $T_{c_2}$ of the ADI scheme \eqref{eqn:semi-discrete4ADI_c2}
\begin{align}
    T_{c_2}=T_{c_1}+\frac{c_{xxyy}}{\varepsilon}\Delta t+h.o.t..
\end{align}

\subsection{Truncation errors of the density equation}
Taking Taylor expansion at $(x,y,t+\Delta t)$, we have
\begin{align}
    M_{i+\frac{1}{2},j}^{n+1} &= M_{i,j}^{n+1}+\frac{1}{2}\partial_xM_{i,j}^{n+1}(\Delta x)+\frac{1}{8}\partial_{xx}M_{i,j}^{n+1}(\Delta x)^2+h.o.t.,\\
    M_{i-\frac{1}{2},j}^{n+1} &= M_{i,j}^{n+1}-\frac{1}{2}\partial_xM_{i,j}^{n+1}(\Delta x)+\frac{1}{8}\partial_{xx}M_{i,j}^{n+1}(\Delta x)^2+h.o.t.,\\
    \left(\frac{h}{\sqrt{M}}\right)_{i+1,j}^{n+1}&=\left(\frac{h}{\sqrt{M}}\right)_{i,j}^{n+1}+\partial_x\left(\frac{h}{\sqrt{M}}\right)_{i,j}^{n+1}\left(\Delta x\right)+\frac{1}{2}\partial_{xx}\left(\frac{h}{\sqrt{M}}\right)_{i,j}^{n+1}\left(\Delta x\right)^2+h.o.t.,\\
    \left(\frac{h}{\sqrt{M}}\right)_{i-1,j}^{n+1}&=\left(\frac{h}{\sqrt{M}}\right)_{i,j}^{n+1}-\partial_x\left(\frac{h}{\sqrt{M}}\right)_{i,j}^{n+1}\left(\Delta x\right)+\frac{1}{2}\partial_{xx}\left(\frac{h}{\sqrt{M}}\right)_{i,j}^{n+1}\left(\Delta x\right)^2+h.o.t..
\end{align}
Therefore, the leading-order Taylor expansion of $\tau_x$ is
\begin{align}
    \tau_xh_{i,j}^{n+1}=&\frac{1}{\sqrt{M_{i,j}^{n+1}}}\left[M_{i,j}^{n+1}\partial_{xx}\left(\frac{h}{\sqrt{M}}\right)_{i,j}^{n+1}(\Delta x)^2 + \partial_xM_{i,j}^{n+1}\partial_x\left(\frac{h}{\sqrt{M}}\right)_{i,j}^{n+1}(\Delta x)^2 \nonumber\right.\\
    &\left.+ \frac{1}{8}\partial_{xx}M_{i,j}^{n+1}\partial_{xx}\left(\frac{h}{\sqrt{M}}\right)_{i,j}^{n+1}(\Delta x)^4+h.o.t.\right],
\end{align}
and the corresponding expansion of $\tau_y$ is 
\begin{align}
    \tau_yh_{i,j}^{n+1}=&\frac{1}{\sqrt{M_{i,j}^{n+1}}}\left[M_{i,j}^{n+1}\partial_{yy}\left(\frac{h}{\sqrt{M}}\right)_{i,j}^{n+1}(\Delta y)^2 + \partial_yM_{i,j}^{n+1}\partial_y\left(\frac{h}{\sqrt{M}}\right)_{i,j}^{n+1}(\Delta y)^2 \nonumber\right.\\
    &\left.+ \frac{1}{8}\partial_{yy}M_{i,j}^{n+1}\partial_{yy}\left(\frac{h}{\sqrt{M}}\right)_{i,j}^{n+1}(\Delta y)^4+h.o.t.\right].
\end{align}
The truncation error of the original scheme \eqref{eqn:semi-discrete4ADI_rho1} is
\begin{align}
    T_{\rho_{1}}:=&\frac{\Delta_{-t}\rho_{i,j}^{n+1}}{\Delta t}-\frac{\sqrt{M_{i,j}^{n+1}}\tau_xh_{i,j}^{n+1}}{(\Delta x)^2}-\frac{\sqrt{M_{i,j}^{n+1}}\tau_yh_{i,j}^{n+1}}{(\Delta y)^2}\nonumber\\
    =&\partial_t\rho_{i,j}^{n+1}-\frac{1}{2}\partial_{tt}\rho_{i,j}^{n+1}(\Delta t) - M_{i,j}^{n+1}\partial_{xx}\left(\frac{h}{\sqrt{M}}\right)_{i,j}^{n+1} - \partial_xM_{i,j}^{n+1}\partial_x\left(\frac{h}{\sqrt{M}}\right)_{i,j}^{n+1}\nonumber\\
    &- \frac{1}{8}\partial_{xx}M_{i,j}^{n+1}\partial_{xx}\left(\frac{h}{\sqrt{M}}\right)_{i,j}^{n+1}(\Delta x)^2 - M_{i,j}^{n+1}\partial_{yy}\left(\frac{h}{\sqrt{M}}\right)_{i,j}^{n+1} - \partial_yM_{i,j}^{n+1}\partial_y\left(\frac{h}{\sqrt{M}}\right)_{i,j}^{n+1}\nonumber\\
    &- \frac{1}{8}\partial_{yy}M_{i,j}^{n+1}\partial_{yy}\left(\frac{h}{\sqrt{M}}\right)_{i,j}^{n+1}(\Delta y)^2+h.o.t.\nonumber\\
    =&-\frac{1}{2}\partial_{tt}\rho_{i,j}^{n+1}(\Delta t)- \frac{1}{8}\partial_{xx}M_{i,j}^{n+1}\partial_{xx}\left(\frac{h}{\sqrt{M}}\right)_{i,j}^{n+1}(\Delta x)^2 - \frac{1}{8}\partial_{yy}M_{i,j}^{n+1}\partial_{yy}\left(\frac{h}{\sqrt{M}}\right)_{i,j}^{n+1}(\Delta y)^2+h.o.t.\nonumber\\
    =&O\left(\Delta t+(\Delta x)^2+(\Delta y)^2\right).
\end{align}
In order to calculate the truncation error of the ADI scheme \eqref{eqn:semi-discrete4ADI_rho2}, we only need to derive the truncation error due to the composite operator $\tau_x\tau_y$. Notice that the coefficients $M_{i+1,j+1}^{n+1}, M_{i+1,j-1}^{n+1}, M_{i+1,j}^{n+1}, M_{i,j+1}^{n+1}, M_{i,j-1}^{n+1}$ converge to $M_{i,j}^{n+1}$ when $\Delta x$ and $\Delta y$ tend to $0$. Therefore, the composite operator $\tau_x\tau_y$ \eqref{eqn:tauxy} can be simplified to 
\begin{align}
    \tau_x\tau_y h_{i,j}^{n+1}=&\frac{1}{\sqrt{M_{i,j}^{n+1}}}\delta_x\left[M_{i,j}^{n+1}\delta_x\left(\frac{\tau_y h}{\sqrt{M}}\right)_{i,j}^{{\color{blue}n+1}}\right]\nonumber\\
    =&\frac{M_{i+\frac{1}{2},j}^{n+1}}{\sqrt{M_{i,j}^{n+1}}}\left[\left(\frac{\tau_yh}{\sqrt{M}}\right)_{i+1,j}^{n+1}-\left(\frac{\tau_yh}{\sqrt{M}}\right)_{i,j}^{n+1}\right]\nonumber\\
    &-\frac{M_{i-\frac{1}{2},j}^{n+1}}{\sqrt{M_{i,j}^{n+1}}}\left[\left(\frac{\tau_yh}{\sqrt{M}}\right)_{i,j}^{n+1}-\left(\frac{\tau_yh}{\sqrt{M}}\right)_{i-1,j}^{n+1}\right]\nonumber\\
     =&\sqrt{M_{i,j}^{n+1}}\left\{\left[\left(\frac{h}{\sqrt{M}}\right)_{i+1,j+1}^{n+1}-\left(\frac{h}{\sqrt{M}}\right)_{i+1,j}^{n+1}\right] -\left[\left(\frac{h}{\sqrt{M}}\right)_{i+1,j}^{n+1}-\left(\frac{h}{\sqrt{M}}\right)_{i+1,j-1}^{n+1}\right]\nonumber\right.\\
    &\left.-\left[\left(\frac{h}{\sqrt{M}}\right)_{i,j+1}^{n+1}-\left(\frac{h}{\sqrt{M}}\right)_{i,j}^{n+1}\right]+\left[\left(\frac{h}{\sqrt{M}}\right)_{i,j}^{n+1}-\left(\frac{h}{\sqrt{M}}\right)_{i,j-1}^{n+1}\right]\right\}\nonumber\\
    &+\sqrt{M_{i,j}^{n+1}}\left\{\left[\left(\frac{h}{\sqrt{M}}\right)_{i,j+1}^{n+1}-\left(\frac{h}{\sqrt{M}}\right)_{i,j}^{n+1}\right] -\left[\left(\frac{h}{\sqrt{M}}\right)_{i,j}^{n+1}-\left(\frac{h}{\sqrt{M}}\right)_{i,j-1}^{n+1}\right]\nonumber\right.\\
    &\left.-\left[\left(\frac{h}{\sqrt{M}}\right)_{i-1,j+1}^{n+1}-\left(\frac{h}{\sqrt{M}}\right)_{i-1,j}^{n+1}\right]+\left[\left(\frac{h}{\sqrt{M}}\right)_{i-1,j}^{n+1}-\left(\frac{h}{\sqrt{M}}\right)_{i-1,j-1}^{n+1}\right]\right\}+h.o.t.\nonumber\\
    =&O((\Delta x\Delta y)^2).
\end{align}

Therefore, the truncation error due to the composite operator $\tau_x\tau_y$ is 
\begin{align}
    \tau_x\tau_yv=O((\Delta x\Delta y)^2).
\end{align}
In the end, the truncation error of \eqref{eqn:semi-discrete4ADI_rho2} is
\begin{align}
    T_{\rho_2}=T_{\rho_1}+O(\Delta t).
\end{align}

\end{document}